\DeclarePairedDelimiter{\floor}{\lfloor}{\rfloor}
\def\II{\mathrm{II}}
\def\Atilde{\widetilde{A}}
\def\Aut{\mathop{\rm Aut}\nolimits}
\def\twobyone#1#2{\bigl(\begin{smallmatrix}#1\\#2\end{smallmatrix}\bigr)
    }
\def\twobytwo#1#2#3#4{\bigl(\begin{smallmatrix}#1&#2\\#3&#4\end{smallmatrix}\bigr)
    }
\def\gend#1{\langle#1\rangle}
\def\calV{\mathcal{V}}
\def\calR{\mathcal{R}}
\def\calD{\mathcal{D}}
\def\calN{\mathcal{N}}
\def\R{\mathbb{R}}
\def\O{\mathrm{O}}
\def\SO{\mathrm{SO}}
\def\SOup{\SO^{\uparrow}}
\def\Oup{\O^{\uparrow}}
\def\calO{\mathcal{O}}
\def\Z{\mathbb{Z}}
\def\Q{\mathbb{Q}}
\def\tensor{\otimes}
\def\sset{\subseteq}
\def\set#1#2{\{#1\mathbin|#2\}}
\def\Hom{\mathop{\rm Hom}\nolimits}
\def\D{\Delta}
\def\SOup{\mathrm{SO}^{\uparrow}}
\let\iso\cong
\def\cong{\equiv}
\newtheorem{theorem}{Theorem}
\numberwithin{theorem}{section}
\numberwithin{equation}{section}
\numberwithin{figure}{section}
\newtheorem{lemma}[theorem]{Lemma}
\newtheorem{defnlemma}[theorem]{Definition/Lemma}
\newtheorem{algorithm}[theorem]{Algorithm}
\newtheorem{problem}[theorem]{Problem}
\newtheorem*{SolutionToSphericalIterative}{Solution}
\newtheorem*{SolutionToCuspidalBatch0}{Solution to Problem~\ref{ProbCuspidalBatch0}}
\newtheorem*{SolutionToEdgewalking}{Solution to Problem~\ref{ProbEdgewalking}}
\theoremstyle{remark}
\newtheorem{remark}[theorem]{Remark}
\begin{document}

\title[]{An alternative to Vinberg's algorithm}
\author{Daniel Allcock}
\thanks{Supported by Simons Foundation Collaboration Grant 429818.}
\address{Department of Mathematics\\University of Texas, Austin}
\email{allcock@math.utexas.edu}
\urladdr{http://www.math.utexas.edu/\textasciitilde allcock}
\subjclass[2010]{%
Primary: 20F55, 
11Y16
}
\date{August 31, 2021}

\begin{abstract}
    Vinberg's algorithm is the main method for finding
    fundamental domains for reflection groups acting
    on hyperbolic space.  Experience shows that it can
    be slow.  We explain why this should be expected,  and
    prove this slowness in some cases.  And we provide an
    alternative algorithm that should be much faster.
    It depends on an algorithm for finding vectors
    with small positive norm 
    in indefinite binary quadratic forms, of independent
    interest.
\end{abstract}

\maketitle

\section{Introduction}
\label{sec-introduction}

\noindent
The Weyl group $W(L)$ of a Lorentzian lattice $L$ 
is generated by reflections and acts
properly discontinuously on hyperbolic space.
Any one of its chambers serves as a fundamental domain, and 
computing $W(L)$ amounts to explicitly describing a chamber.
Since the early 1970's, Vinberg's algorithm has been the 
tool for doing this.  In this paper we present a new
method, which has several advantages.
In very general terms, Vinberg's algorithm is like
solving Pell's equation
$x^2-ny^2=1$ by brute force, first trying $y=1$, then $y=2$, etc.
 Ours is like the exponentially faster
solution using continued fractions.
We hope to apply it to the problem of classifying all
Lorentzian lattices which are reflective (ie have
chambers of finite hyperbolic volume).

Vinberg's algorithm requires a choice of a point $k$ of hyperbolic
space,
a choice of chamber $C$ containing~$k$,
and a way to enumerate all reflections in $W(L)$
whose mirrors lie at any chosen hyperbolic distance from~$k$.  The output
is the list of all simple roots of~$C$.  If there are only
finitely many then the algorithm recognizes this and terminates.
If not, then
the algorithm might not terminate, but will still find all
the simple roots
if left to run forever.

Our algorithm requires $k$ to be
a vertex,
and only finds the simple roots corresponding to
$k$'s component of the topological boundary $\partial C$.  
Neither of these limitations is serious.
First, 
$k$ has always been chosen to be a vertex of~$C$,
except in a few special situations---the main example being Conway's group
$W(\II_{25,1})$, whose chamber has infinite volume and infinitely
many facets \cite{Conway}.  Second,
$\partial C$ is connected unless $C$
has infinite volume.   The usual method of recognizing 
that $C$ has infinite volume
is to find enough vertices of $C$ that one
can construct  infinitely many
diagram automorphisms of~$C$.    This 
applies without change in our situation
(see remark~\ref{RkInfiniteVolumeTermination}). 
On the other hand, our method
only works for lattices over~$\Z$.
Also, we do use
Vinberg's algorithm in a limited way, for working out the
simple roots
at a corner of the chamber, given the corner.

When comparing algorithms, one usually analyzes 
the asymptotics of the time or space
required, as a function of the size of the input.  
This does not make sense in this case, because
there is no
upper bound on the size of the answer,
in terms of the size of the input.
Furthermore, Nikulin showed
that there are only
finitely many reflective lattices \cite{NikulinFinitelyMany}\cite{Esselmann}.  So there 
are constant upper bounds on the run-times of both algorithms
(ours and Vinberg's),
when they are applied to reflective lattices.  
And applied to
non-reflective lattices, both algorithms run forever. 
So there is no asymptotic way to compare them.  (In a few
nonreflective
examples like $\II_{25,1}$, Vinberg's algorithm ``terminates''
after finding infinitely many simple roots in finitely many batches \cite{Conway}\cite{BorcherdsLikeTheLeechLattice}.  
There are only finitely many examples like this \cite{NikulinFinitelyManyParabolic}.)

So our main goal is to prove the correctness of our
algorithm.  See remark~\ref{RkPerformance} for a heuristic 
argument that  it is much faster than
Vinberg's. 

Experience shows that for some lattices,
Vinberg's algorithm can take a very long
time.  It also slows down dramatically as
the dimension of~$L$ increases. 
In section~\ref{SecSlowness}
we prove that Vinberg's algorithm can require astronomical
time for fairly simple lattices.  Our
algorithm was motivated by the observed slowness,
this proven slowness,
and our pessimism about the performance of existing
implementations of Vinberg's algorithm in large dimensions ($\dim(L)>6$ or so).

Our basic idea is to ``walk along the edges of~$C$''.  That is,
given a vertex, 
we 
find the vertices at the other ends of the edges emanating from it.
This amounts to searching for small-but-positive-norm vectors
in $2$-dimensional Lorentzian lattices, which we do using
a variation of the standard method for solving Pell's equation 
$x^2-ny^2=1$.  This appears in
section~\ref{SecShortVectors}; we hope it is of independent interest.
After finding these neighboring 
vertices, we find their neighbors, and then their neighbors' neighbors, etc.
We stop when the endpoints of all edges are known
(or all the walking tires us out).  
The stopping
criterion, ie 
for recognizing that $C$ has finite volume, amounts to
running out of edges to walk along (Theorem~\ref{ThmComputesChamber}).  

In section~\ref{SecPreliminaries} we review some mostly-standard notation
and language. In sections~\ref{SecVinberg}--\ref{SecSlowness} we review Vinberg's
algorithm and establish the claimed slowness.
Section~\ref{SecNormedDynkinDiagrams} introduces ``normed Dynkin diagrams'', an
almost trivial variation on ordinary Dynkin diagrams.  They are
useful when finding the ``batch 0'' simple 
roots in sections~\ref{SecBatchZero} and~\ref{SecCuspidalBatch0}, at ordinary vertices
and cusps respectively, and  in edgewalking
in section~\ref{SecEdgewalking}, which also relies on our algorithms
for rank~$2$ Lorentzian lattices in section~\ref{SecShortVectors}.

\section{Preliminaries}
\label{SecPreliminaries}

\noindent
We recall some standard language and notation.  The only 
non-standard content is the notation $\Oup(L)$ and
the concepts of constraint lattices
and almost-roots.  The notation $\gend{\dots}$
means the sub\-lattice/sub\-group/\discretionary{}{}{}sub\-space generated by $\dots$,
depending on context.

A lattice $L$ means a free abelian group equipped with a
$\Q$-valued symmetric bilinear form, called the inner product
or dot product.
For $v,w\in L$, it is written $v\cdot w$, and $v^2$ is an 
abbreviation for $v\cdot v$, called the norm of~$v$.
We call $v$ isotropic if its norm is~$0$, and $L$
isotropic if it contains nonzero isotropic vectors.
$L$ is called Euclidean if it is positive definite.
If $X\sset L$ then $X^\perp$ means the
sublattice consisting of all $v\in L$ having zero inner product
with all members of~$X$.  

$L$ is called integral if  all inner products are integers.
It is called nondegenerate if $L^\perp=0$.
In this case the dual lattice $L^*=\Hom(L,\Z)$ can be
identified with the set of vectors in
$L\tensor\Q$ having integral inner products with all elements of~$L$.
When $L$ is integral and nondegenerate, then $L\sset L^*$, and
$L^*/L$ is called the discriminant group of~$L$.

$L$ is called
Lorentzian if its signature is $(n,1)$ for some $n\geq1$, meaning
that $L\tensor\R$ is the orthogonal direct sum of an $n$-dimensional
positive definite subspace and a $1$-dimensional negative definite
subspace.  In this case we make the following definitions.
A vector of positive resp.\ negative norm is called spacelike,
and a nonzero isotropic vector is called lightlike.  
The lightlike vectors
fall into two connected components, called the two light cones.   
When one of them is distinguished, we sometimes refer to its
convex hull as the future cone and call its elements
future-directed.  These terms are borrowed
from relativity.

An isometry from one lattice to another is a group isomorphism
that respects inner products.  The orthogonal group $\O(L)$ means
the group of isometries from $L$ to itself.  
Every isometry has determinant $\pm1$, and $\SO(L)$ means the index~$2$
subgroup with 
determinant~$1$.  If $L$ is Lorentzian, then
$\Oup(L)$ and $\SOup(L)$ mean the index~$2$ subgroups
of $\O(L)$ and $\SO(L)$ whose elements preserve each of the two light cones.

If $\alpha^2>0$ and 
$\alpha\cdot L\sset \frac{\alpha^2}{2}\Z$, then we call $\alpha$ an almost-root of~$L$.  
The importance of almost-roots is that the reflection $x\mapsto x-2(x\cdot \alpha)\alpha/\alpha^2$
in $\alpha$ is an element of $\Oup(L)$.  If $\alpha$ is also primitive (ie $L/\gend{\alpha}$ is torsion-free)
then we
call $\alpha$ a root.  Every almost-root is either a root or twice a root.  
The ones which are not roots are not very 
interesting, but it is convenient to
keep them around in the intermediate steps of our algorithms and then 
filter them out later.  The convenience comes from the following lemma,
which follows immediately from the definitions.

\begin{defnlemma}[Constraint lattices]
    \label{LemConstraintLattices}
    Suppose $L$ is a lattice and $N>0$.  Then the norm~$N$ vectors in
    the \emph{constraint lattice}
    $L_N=L\cap\frac{N}{2}L^*$ are exactly the norm~$N$ almost-roots of~$L$.
    \qed
\end{defnlemma}

\begin{lemma}[Root norms]
    \label{LemRootNorms}
    Suppose $L$ is a nondegenerate integral lattice and $e$ is the
    exponent of the discriminant group $L^*/L$.  
    Then the norm of any root of~$L$
    divides~$2e$.
    \qed
\end{lemma}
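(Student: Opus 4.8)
The plan is to unwind the definition of almost-root into a statement about the dual lattice, and then combine it with the definition of the exponent. First I would observe that the almost-root condition $\alpha\cdot L\sset\frac{\alpha^2}{2}\Z$ says precisely that the vector $\beta:=\frac{2}{\alpha^2}\alpha\in L\tensor\Q$ has integral inner product with every element of~$L$. By the description of the dual lattice recalled in section~\ref{SecPreliminaries}, this means exactly that $\beta\in L^*$.

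Next I would bring in the exponent. Since $L$ is integral we have $L\sset L^*$, and since $L$ is nondegenerate the index $[L^*:L]$ is finite, so $L^*/L$ is a finite abelian group with a well-defined exponent~$e$ satisfying $eL^*\sset L$. Applying this containment to $\beta$ gives $e\beta=\frac{2e}{\alpha^2}\alpha\in L$.

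The final step, and the only place where I expect to use that $\alpha$ is a genuine \emph{root} rather than merely an almost-root, is to invoke primitivity. Because $L/\gend{\alpha}$ is torsion-free, the only rational multiples of $\alpha$ lying in $L$ are integer multiples: if $c\alpha\in L$ with $c\in\Q$, then $c\alpha\in(\Q\alpha)\cap L=\gend{\alpha}$, which (as $\alpha\neq0$) forces $c\in\Z$. Applying this with $c=\frac{2e}{\alpha^2}$ yields $\frac{2e}{\alpha^2}\in\Z$, that is, $\alpha^2\mid2e$, as claimed.

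I do not anticipate a serious obstacle; the argument is essentially a chain of reinterpretations of the relevant definitions. The one point meriting care is tracking where each hypothesis enters: integrality so that $L\sset L^*$ and $L^*/L$ is a well-defined group, nondegeneracy so that this group is finite and $e$ exists, and \emph{primitivity} at the very end, which is exactly what upgrades the membership $\frac{2e}{\alpha^2}\alpha\in L$ into the divisibility $\alpha^2\mid2e$.
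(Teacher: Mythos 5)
Your proof is correct, and it takes a genuinely different and more economical route than the paper's. The paper sets $M=\alpha^\perp$, splits into cases according to whether $M\oplus\gend{\alpha}$ has index $1$ or $2$ in $L$, and in the harder case uses the nondegeneracy of the $\Q/\Z$-valued pairing on $M^*/M$ to build an explicit vector $\lambda\in L^*$ whose class in $L^*/L$ has order divisible by $\alpha^2/2$. You bypass all of this: the almost-root condition $\alpha\cdot L\sset\frac{\alpha^2}{2}\Z$ is, verbatim, the statement that $\frac{2}{\alpha^2}\alpha\in L^*$, so $e\cdot\frac{2}{\alpha^2}\alpha\in L$, and primitivity converts this membership into $\frac{2e}{\alpha^2}\in\Z$. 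Each step checks out (in particular, torsion-freeness of $L/\gend{\alpha}$ does give $(\Q\alpha)\cap L=\Z\alpha$), and you correctly isolate primitivity as the one place where root-ness, as opposed to almost-root-ness, is used --- indeed the conclusion genuinely fails for almost-roots that are twice roots, e.g.\ the almost-root $2$ in the unimodular lattice $\Z$. What the paper's longer argument buys is the sharper divisibility $\alpha^2\mid e$ in the case $L=M\oplus\gend{\alpha}$, but the lemma as stated does not need that refinement, so your argument is a complete and cleaner proof of the statement.
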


\begin{proof}
    Suppose $\alpha$ is a root, and  define $M=\alpha^\perp$.  By
    $L\cdot\alpha\sset\frac{\alpha^2}{2}\Z$,
    $M\oplus\gend{\alpha}$
    has index~$1$ or~$2$ in $L$.  If the index is~$1$, then 
the discriminant group $\Z/\alpha^2\Z$ of $\gend{\alpha}$
    is a direct summand of that of~$L$, so  $\alpha^2|e$.

    Now suppose the index is~$2$,
    so $L$ is generated by $M\oplus\gend{\alpha}$ and a vector
    $(u,\alpha/2)$ with $u\in M^*$.  
    Integrality forces $\alpha^2$ to be even.
    From $[L:M\oplus\gend{\alpha}]=2$
    follows $2u\in M$.
    But 
    $u\notin M$,  because otherwise $\alpha/2$ would
    lie in~$L$, contradicting the hypothesis that $\alpha$ is a root.
    So the image of $u$ in $M^*/M$ has order~$2$.
    
    Because $M$ is integral, the $\Q$-valued
    inner product on $M^*$ descends to a $\Q/\Z$-valued 
    bilinear pairing on $M^*/M$.  It is standard that this
    is nondegenerate.  Since $u\notin M$,
    there exists $m\in M^*$ with $m\cdot u\not\cong0$ mod~$1$.
    Since $u$ has order~$2$ (mod~$M$), the only
    possibility is $m\cdot u\cong\frac12$ 
    mod~$1$.
    Therefore $\lambda=(m,\alpha/\alpha^2)$ has integral inner product
    with~$(u,\alpha/2)$.  Since $\lambda$ also has integral inner
    products with $\alpha$ and all elements of~$M$, we have $\lambda\in L^*$.
    Considering its second component shows that 
    $n\lambda$ 
    can only lie in~$L$ if $\frac{\alpha^2}{2}|n$.
    Therefore the exponent of $L^*/L$ is divisible by $\alpha^2/2$.
    That is, $\alpha^2|2e$.
\end{proof}

We defined reflections in roots (and almost-roots) above.  One can 
define them more generally, for example allowing reflections in timelike vectors.
But for us, reflections mean reflections in roots.  A reflection group
means a group generated by reflections.  As a special case, the group $W(L)$
generated by all reflections of~$L$ is called the Weyl group of~$L$, and lies
in $\Oup(L)$.  

We write $\R^{n,1}$ for $(n+1)$-dimensional Minkowski space; $L\tensor\R$
is isometric to $\R^{n,1}$ for any Lorentzian lattice~$L$.  Hyperbolic
space $H^n$ means the image in $P\R^{n,1}$ of the timelike vectors, and its
boundary $\partial H^n$ means the image of the lightlike vectors.  
When we have one light cone distinguished, we sometimes identify $H^n$ with
the norm~$-1$ hyperboloid in the future cone.  In that case, we use 
future-pointing vectors to represent points of $\overline{H^n}:=H^n\cup\partial H^n$.  

The mirror of a reflection
means its fixed-point set in
$H^n$.  Removing all mirrors from $H^n$ leaves a disconnected set.  The closure
of any one of them in $\overline{H^n}$ is called a Weyl chamber (or just a chamber).  
In particular, chambers contains their ideal vertices.
When we have a chamber in mind,
we usually write $C$ for it.  The Weyl group $W(L)$ acts
simply transitively on the chambers.  
The dihedral angles of $C$ are 
integral submultiples of~$\pi$; in particular $C$ has no obtuse
dihedral angles.    Sometimes we pass without comment 
between $C$ and its preimage in
 the future cone.

Because $W(L)$ is discrete in $\Oup(L\tensor\R)$, its mirrors form a locally
finite subset of $H^n$, 
so there is subtlety when speaking of faces of~$C$ of dimension${}>1$.
There is a little subtlety when speaking of vertices, if 
$C$ has infinitely many facets.  This is only relevant in
section~\ref{SecEdgewalking}, where we give a precise definition of ``corners''.

A root $\alpha$ is called a simple root of a chamber~$C$ 
if it is orthogonal to a facet of~$C$ and has negative inner product
with vectors in the interior of~$C$.  Another way to say this is that 
$C$ is the 
projectivization of the set of future-directed vectors that have negative
inner product with all simple roots.
(The convention for Lorentzian lattices is the opposite of the
convention in Lie theory, where simple roots are inward-pointing not
outward-pointing.)
A standard consequence of the absence of obtuse dihedral angles
is that the simple roots of a (chamber of a) positive-definite
lattice are linearly independent.

\section{Vinberg's algorithm}
\label{SecVinberg}

\noindent
In this section we describe Vinberg's algorithm \cite[\S3.2]{VinbergOnTheGroups}, at first
geometrically, and then algebraically
in the special case of
reflection groups of lattices. Suppose $W$ is a discrete group
of isometries of $H^n$, generated by reflections.  In this
general setting,  the two unit vectors in~$\R^{n,1}$
orthogonal to
the mirror of a reflection are called the roots of that reflection.
Suppose 
$k\in\R^{n,1}$ 
is timelike; it is called the ``control vector'', and
we use the same symbol
for its image in hyperbolic space $H^n$.  
Suppose known a set of simple roots $B_0$ (``batch 0'')
for its $W$-stabilizer $W_k$.  Let $C_k$ be the corresponding
chamber of $W_k$, ie the cone in $\R^{n,1}$
of vectors having nonpositive inner products with the batch~$0$
roots.  There is a unique chamber $C$ of $W$ that lies in~$C_k$
and contains~$k$.  
Vinberg's algorithm extends $B_0$ to the set $B$ of simple roots for~$C$.

The construction is inductive, so for each 
$D\geq0$ we write $B_D$ (``the distance $D$ batch'') for the set 
of simple roots for~$C$ whose mirrors lie at hyperbolic distance~$D$
from~$k$.  Obviously  $B=\cup_{D\geq0}B_D$,
so it will suffice to find each~$B_D$.
We assumed $B_0$ known, which allows the induction to begin.

Suppose we have already found $B_{\leq D}=\cup_{d\leq D}B_{d}$.
Consider
all mirrors of~$W$ lying at distance${}>D$ from $k$, and let $D'$
be the distance to the closest of these.  ($D'$ exists because $W$ is discrete.)
For each mirror at that distance, consider its
outward-pointing root $\alpha$ (ie, $\alpha\cdot k<0$). We call $\alpha$ ``accepted''
if it has nonpositive inner products with 
all roots in~$B_{\leq D}$.   
Vinberg's algorithm amounts to the statement that $B_{D'}$ is this
set of accepted roots.  
(Of course, all batches between $D$ and
$D'$ are empty.)  

\medskip
The algorithm
might not terminate.  This is in the nature of things, because some
Weyl chambers have infinitely many facets.  
But even in this case, $B=\cup_{D\geq0}B_D$.
Also, in \cite[Prop.~5]{VinbergOnTheGroups}
Vinberg established a criterion on $B_{\leq D}$:  if the polytope they
define has finite hyperbolic volume,  then
all later batches are empty. 
In this case we regard the algorithm as terminating, with 
$B=B_{\leq D}$.
When one suspects that there are infinitely many simple roots,
some  ingenuity is needed to establish this rigorously.  
This is beyond the scope of this paper;   the
main method is to show that $C$ has infinitely many isometries;
see \cite[Theorem~5.5]{BorcherdsAutsOfLorentzianLattices}, 
\cite[II\S3]{NikulinRank3}, 
\cite[p.~20]{AllcockRank3} and 
\cite[Prop.~3.2]{ScharlauWalhorn} for examples of this.

\medskip
Another difficulty is that one can only call it an algorithm
if  one has an algorithm
for finding $D'$ and all roots $\alpha$ of~$W$ with $\alpha^\perp$ at 
distance $D'$ from~$k$.  
Both problems are straightforward in the case  of Lorentzian lattices.
Namely, suppose $L$ is  a Lorentzian lattice and $W\sset\Aut(L)$.
One scales the 
control vector  to be a primitive lattice
vector.  Also, one changes the definition of
the roots of a mirror: now we use the two primitive lattice vectors
orthogonal to it.     This is convenient and clearly has no effect on
the algorithm.

The hyperbolic distance between $k$ and $\alpha^\perp$ is
$
\sinh^{-1}(-k\cdot \alpha/\sqrt{-k^2 \alpha^2})
$.
Because $\sinh^{-1}$ is
increasing and $k^2$ is constant, considering roots~$\alpha$ in increasing order of
$D$ is the same as considering them in
increasing order of~$-k\cdot \alpha/\sqrt{\alpha^2}$,
which we call  the  priority of~$\alpha$.  
The algorithm examines roots with smaller priority first.
There are only finitely many possibilities for $\alpha^2$ (Lemma~\ref{LemRootNorms}).
For fixed~$\alpha^2$, the 
possibilities for the priority~$p$ lie in a scaled copy of~$\Z$.
For fixed $\alpha^2$ and~$p$, one can
enumerate all roots of $L$ with those parameters.
This boils down to iterating
over all the elements of the lattice $k^\perp$, 
or one of finitely many translates of it inside $k^\perp\tensor\Q$,
whose norm is specified in terms of $\alpha^2$ and~$p$.
Together these facts  allow one to find~$D'$ and $B_{D'}$, making
concrete the 
inductive step of Vinberg's algorithm. 

\medskip
All of Vinberg's geometric arguments apply perfectly well with hyperbolic
space  $H^n$ replaced by the sphere
$S^n$.  
The changes needed to the above are the following:  replace $\R^{n,1}$ by 
Euclidean space $\R^{n+1}$ and
$\sinh^{-1}$ by $\sin^{-1}$, and
assume $k\neq0$ rather than that $k$ is timelike.  

\medskip
Conway \cite{Conway} found another variation of the classical form of the algorithm.
He observed that the same algorithm works
perfectly well with $k$ allowed to be timelike, provided that
we remain in the Lorentzian lattice setting.  Two
observations are needed.  First, one uses a notion of ``distance'' to
the ideal point of~$H^n$, whose level sets are the horospheres centered
there.
Happily, 
this requires no change to the algorithm (when it is formulated in terms of
the priority).
Second, we can
no longer use the discreteness of~$W$ to conclude that $D'$ exists.
However, the possible priorities are constrained just as before,
so one can examine them all in increasing order.
When non-empty, the set of roots of $L$ with
given norm and priority is now infinite, but 
falls into finitely many orbits under the unipotent
subgroup of the $\O(L)$-stabilizer of~$k$.
Finding these orbits 
boils down to examining finitely many cosets of  $k^\perp/\gend{k}$ 
(or a sublattice of it) in 
$(k^\perp/\gend{k})\tensor\Q$.   So again the search for $D'$ and $B_{D'}$
can be made explicit.

\medskip
At several points it will be convenient to use a minor variation
on Vinberg's algorithm:

\begin{lemma}
    \label{LemVariation}
    Suppose $L$ is a Euclidean or Lorentzian lattice, $W$ is a subgroup of $\O(L)$
    that is generated by reflections, and
    $k\in L\tensor\R-\{0\}$.
    Call $k$ the control vector, and 
if $L$ is Lorentzian then
    make the extra assumption
    that 
    $k$ is timelike or lightlike.
    Suppose $C$ is a chamber of $W$ that contains~$k$, and
    $B_0$ is the set of simple roots of~$C$ that are orthogonal
    to~$k$.

    Suppose $\beta_1,\beta_2,\dots$ is a finite or infinite sequence of almost-roots
    of~$W$,
    having negative inner products with~$k$, such that
    \begin{enumerate}
        \item
            \label{LemVariationNonpositiveIPs}
            they have nonpositive inner products with all members of~$B_0$;
        \item
            \label{LemVariationAllSimpleRoots}
            they include all the simple roots of~$C$ other than those in~$B_0$;
        \item
            \label{LemVariationPriorityOrder}
            they appear in nondecreasing order
            of priority;
        \item
            \label{LemVariationNormOrder}
            those with equal priority appear in nondecreasing order of norm.
    \end{enumerate}
    Inductively define
     $\beta_i$ as ``approved'' if it has nonpositive inner product with all of its
    predecessors $\beta_{j<i}$ which were approved.  

    Then the approved $\beta_i$ are the
    simple roots of~$C$ that are not in~$B_0$.
\end{lemma}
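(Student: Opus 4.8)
The plan is to reduce the statement to the correctness of Vinberg's algorithm, proving both inclusions simultaneously by strong induction on the index~$i$, with the single invariant
\[
\beta_i \text{ is approved}\iff\beta_i\text{ is a simple root of }C\text{ not in }B_0 .
\]
(I take the $\beta_i$ distinct; a repeated vector merely sees its second copy rejected and does not change the set of approved vectors.) Two geometric facts drive everything. First, distinct simple roots of a single chamber have nonpositive inner product, because the dihedral angles of~$C$ are at most~$\pi/2$. Second, the core Vinberg lemma: if $\beta$ is an outward almost-root of~$W$ (so $\beta\cdot k<0$ and $r_\beta\in W$) whose mirror $\beta^\perp$ is \emph{not} a wall of~$C$, then $\beta$ has strictly positive inner product with some simple root $\gamma$ of~$C$ of strictly smaller priority.

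Granting these, the easy inclusion runs: if $\beta_i$ is a simple root not in $B_0$, then every approved predecessor $\beta_j$ is, by the induction hypothesis, a simple root of~$C$; being a distinct simple root of the same chamber, $\beta_i\cdot\beta_j\le0$ by the first fact, so $\beta_i$ clears all its approved predecessors and is approved. For the reverse inclusion I show that a $\beta_i$ which is \emph{not} a simple root of~$C$ gets rejected; note $\beta_i\cdot k<0$ already puts $\beta_i\notin B_0$. If $\beta_i^\perp$ is a wall of~$C$, the primitive outward root $\gamma$ along it is a simple root, and since an almost-root on a wall is that root or twice it, $\beta_i=2\gamma$; now $\gamma$ and $\beta_i$ share a mirror, hence a priority, while $\gamma^2<\beta_i^2$, so by hypotheses~(\ref{LemVariationPriorityOrder}) and~(\ref{LemVariationNormOrder}) $\gamma$ precedes $\beta_i$. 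Here $\gamma\notin B_0$ (else $\gamma\cdot k=0$ would force $\beta_i\cdot k=0$), so by~(\ref{LemVariationAllSimpleRoots}) it occurs in the sequence and by the induction hypothesis it is approved; as $\beta_i\cdot\gamma=2\gamma^2>0$, $\beta_i$ is rejected. If instead $\beta_i^\perp$ is not a wall, the core lemma supplies a simple root $\gamma$ with $\gamma\cdot\beta_i>0$ and strictly smaller priority. By hypothesis~(\ref{LemVariationNonpositiveIPs}) the inequality $\gamma\cdot\beta_i>0$ forces $\gamma\notin B_0$---this is exactly where~(\ref{LemVariationNonpositiveIPs}) earns its keep---so $\gamma$ lies in the sequence by~(\ref{LemVariationAllSimpleRoots}), precedes $\beta_i$ by~(\ref{LemVariationPriorityOrder}), and is approved by the induction hypothesis; hence $\beta_i$ is rejected.

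It remains to prove the core lemma, which is the main obstacle and is essentially Vinberg's original argument. Let $f$ be the point of $\beta^\perp$ nearest~$k$, realizing the priority of~$\beta$, and let $\sigma$ be the geodesic from $k$ to~$f$, along which $x\mapsto\beta\cdot x$ increases strictly to~$0$. Since $\mathrm{int}(C)$ is connected, convex, and disjoint from the mirror $\beta^\perp$, and $\beta\cdot k<0$, we get $C\subseteq\{x:\beta\cdot x\le0\}$. If $\sigma$ leaves $C$, it crosses a wall $\gamma^\perp$ ($\gamma\in B$ outward) at a point $q$ strictly between $k$ and~$f$, so $\gamma$ has priority at most that of $q$, strictly below that of $\beta$, and the monotonicity of $\beta\cdot\sigma$ and $\gamma\cdot\sigma$ gives $\gamma\cdot\beta>0$. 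If $\sigma\subseteq C$, then $f\in C\cap\beta^\perp$, so $\beta^\perp$ supports $C$ at~$f$; writing the tangent cone of $C$ at $f$ as $\{x:\gamma_j\cdot x\le0\}$ over the walls $\gamma_j$ through~$f$, convex duality gives $\beta=\sum_j c_j\gamma_j$ with $c_j\ge0$, and $0<\beta^2=\sum_j c_j(\gamma_j\cdot\beta)$ forces $\gamma_j\cdot\beta>0$ for some~$j$; each such $\gamma_j^\perp$ passes through~$f$, and since $f$ is the foot of the perpendicular to $\beta^\perp$ alone it cannot also be the foot to the distinct mirror $\gamma_j^\perp$, so $\gamma_j$ again has strictly smaller priority.

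The delicate points are verifying the sign $\gamma\cdot\beta>0$ in both cases and the strictness of the priority drop in the tangency subcase, and handling $k$ lightlike, where ``priority'' measures a horospherical rather than a hyperbolic displacement; for the latter one runs the identical argument with horospheres in place of distance spheres, exactly as in Conway's variant recalled in Section~\ref{SecVinberg}. The Euclidean case is the same with $S^n$ and $\sin^{-1}$ in place of $H^n$ and $\sinh^{-1}$.
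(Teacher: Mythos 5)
Your proof is correct, and its skeleton---strong induction on $i$ with the invariant that $\beta_i$ is approved iff it is a simple root of~$C$ outside~$B_0$---is exactly the paper's. The genuine difference is in what you take as given. The paper treats the correctness of Vinberg's acceptance criterion as already established (Section~\ref{SecVinberg}): in the rejection direction it simply notes that a non-simple root ``is not accepted by Vinberg's algorithm,'' hence has positive inner product with some simple root of strictly smaller priority, and moves on. You instead re-prove that criterion as your ``core lemma'' via the foot-of-perpendicular and tangent-cone argument, which makes the write-up self-contained at the cost of carrying the burden of Vinberg's original geometric proof. You also organize the rejection cases by whether $\beta_i^\perp$ is a wall of~$C$, rather than the paper's split into ``$\beta_i$ is a root'' versus ``$\beta_i$ is twice a root''; these decompositions coincide (an outward almost-root whose mirror is a wall but which is not simple must be twice the simple root on that wall), and both invoke hypotheses \eqref{LemVariationPriorityOrder} and \eqref{LemVariationNormOrder} at the same points. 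Two small remarks on the core lemma: ``monotonicity of $\beta\cdot\sigma$ and $\gamma\cdot\sigma$'' does not by itself give $\gamma\cdot\beta>0$; the clean route is to write the foot as $f_0=k-\frac{k\cdot\beta}{\beta^2}\beta$, so that $\gamma\cdot f_0=\gamma\cdot k-\frac{k\cdot\beta}{\beta^2}(\gamma\cdot\beta)$, whence $\gamma\cdot f_0>0$, $\gamma\cdot k\le0$ and $k\cdot\beta<0$ force $\gamma\cdot\beta>0$; the same identity disposes of the lightlike case. And your use of \eqref{LemVariationNonpositiveIPs} to rule out $\gamma\in B_0$ is exactly the role that hypothesis plays, implicitly, in the paper's appeal to Vinberg's criterion.
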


\begin{proof}
    Fix $i\geq1$ and suppose that each $\beta_j$ with $j<i$ is approved if and only
    if it is a simple root of~$C$.   We will prove the same for $\beta_i$.

    First, supposing that $\beta_i$ is a simple root of~$C$, we will show that it
    is approved.  All the simple roots of~$C$ with (nonzero) priority less
    than than~$\beta_i$
     occur among the $\beta_{j<i}$, 
    by hypotheses \eqref{LemVariationAllSimpleRoots} and \eqref{LemVariationPriorityOrder}.  By induction they
    were approved, and no other predecessors of $\beta_i$ were.  Since $\beta_i$
    is a simple root, it has nonpositive inner products with all other
    simple roots.  Therefore it has nonpositive inner products with all
    approved predecessors.  So $\beta_i$ is approved.

    Next, supposing $\beta_i$ is a root and is approved, we will prove that it
    is a simple root of~$C$.  
    All simple roots of~$C$ with (nonzero) smaller priority occur among the
    $\beta_{j<i}$ by \eqref{LemVariationAllSimpleRoots} and \eqref{LemVariationPriorityOrder}, and were approved, by induction.
    Since $\beta_i$ is approved, is has nonpositive inner products with all of them.
  So $\beta_i$ is
    accepted by Vinberg's algorithm, and is therefore a simple root.

    Finally, supposing that $\beta_i$ is not a root, we must prove that it is 
    not approved.  Because $\beta_i$ is an almost-root but not a root, 
    $\beta_i/2$ is a root.  
    Note that $\beta_i/2$ has the same priority as~$\beta_i$, but strictly smaller norm.
    
    If $\beta_i/2$ is a simple root, then it equals some $\beta_j$, by \eqref{LemVariationAllSimpleRoots}.
    In fact it is a predecessor of $\beta_i$, by \eqref{LemVariationNormOrder}.  By
    induction it was approved.  From $\beta_i\cdot \beta_i/2>0$ follows that $\beta_i$ is
    not approved.

    On the other hand, if $\beta_i/2$ is not a simple root, then it is not
    accepted by Vinberg's algorithm.  So $\beta_i/2$ has positive inner product
    with some simple root with strictly smaller priority than $\beta_i/2$.
    By \eqref{LemVariationAllSimpleRoots} this simple root is some $\beta_j$, and considering priority
    shows that $j<i$.  By induction $\beta_j$ was approved.  But $\beta_j\cdot \beta_i$ 
    has the same sign as $\beta_j\cdot \beta_i/2>0$.  So $\beta_i$ is
    not approved.
\end{proof}

\section{Slowness}
\label{SecSlowness}

\noindent
In our generalization \cite{AllcockRank3} of 
Nikulin's classification of rank~$3$
reflective Lorentzian lattices 
\cite[I]{NikulinRank3}, we ran Vinberg's algorithm on
$204\,520$ candidate lattices.  Most candidates took a fraction of a second.
But some took many hours, examining long sequences of batches that all
turned out empty.
There is a phenomenon in the rank~$2$ case that predicts this behavior,
and persists in higher dimensions.

(The proof in \cite{AllcockRank3} examines $857$
candidates.  Later we simplified the overall logic at the cost of
testing more candidates, and improved our implementation of Vinberg's
algorithm.  The phenomenon of long sequences of empty batches was
already apparent for the original~$857$.)

The slowness of Vinberg's algorithm for some lattices
is closely related to the well-known fact that the fundamental solution
$(x,y)$ to Pell's equation $x^2-ny^2=1$ may be large, even  when $n$ is
small.  For example, if $n=106$ then 
$(x,y)=( 32\,080\,051 ,3\,115\,890)$.
The naive 
way to find $(x,y)$ is to start with the known solution
$(1,0)$, then look for solutions $(?,1)$, then for solutions $(?,2)$,
and so on.  This amounts to checking that $106\cdot1^2+1=107$ is not a square,
then that $106\cdot2^2+1=425$ is not a square, etc.  This process
takes  $3\,115\,890$
iterations to find a solution.  On the other hand, the continued fractions
method of solving
Pell's equation takes $17$ steps and can be
done by hand.

The main point of this section is that Vinberg's algorithm 
amounts to the naive approach.
To make this precise, take $L$ to have inner product matrix $\twobytwo{1}{0}{0}{-106}$,
take the control vector $k$ to be $(0,1)$, take batch~$0$ to consist
of the root $(-1,0)$, and take $W$ to be the
subgroup of $\Aut(L)$ generated by the reflections in norm~$1$ roots.
Then Vinberg's algorithm exactly matches the previous paragraph, finding
$3\,115\,889$ consecutive empty batches.
We made the artificial restriction to norm~$1$ roots in order to
make this simple statement.

For the more natural case $W=W(L)$,  theorem~\ref{ThmSlowness} 
shows that Vinberg's algorithm
will stop 
when it finds the  root $r=(41234, 4005)$ of norm~$106$.
This lies in the $4005$th batch
of norm~$106$ roots (taking into account the fact that $106$ must
divide $r\cdot k$).  
So the algorithm examines
at least this many batches before stopping.

It is easy to give examples with much worse performance.
In the rest of this section we suppose

$n$ is a square-free integer bigger than~$1$;

$\calO$ is the ring of algebraic integers in $\Q(\sqrt n)$;

$L$ is the lattice underlying~$\calO$ (at the scale where $1$ has norm~$1$);

$W=W(L)$;

$k$ (the control vector) is a fixed square root $\sqrt n$ of~$n$; 

$\alpha_0=-1$ (this constitutes batch 0); 

$C$ is the chamber of~$W$ containing $k$ and having $\alpha_0$ as a simple root;

$\alpha$ is the other simple root of~$C$.

\noindent
We will show how to find~$\alpha$  and give a lower bound
for the number of  batches Vinberg's algorithm examines before finding~$\alpha$.
The example above was the $n=106$ case, and   
some other examples appear in table~\ref{TableSlowness}.

\begin{table}
    \smaller
    \smaller
    \begin{tabular}{rrrrr}
        \multicolumn{1}{c}{$n$}
        &\multicolumn{1}{c}{$\alpha^2$}
        &\multicolumn{1}{c}{coefficient of~$1$}
        &\multicolumn{1}{c}{coefficient of~$\sqrt n$}
        &\multicolumn{1}{c}{batch number}
        \\
        \hline
2&2&2&1&1\\
3&6&3&1&1\\
        5&5&5\rlap{/2}&1\rlap{/2}&1\\
6&3&3&1&1\\
7&2&3&1&1\\
19&38&57&13&13\\
67&134&1809&221&221\\
73&73&9125&1068&2\,136\\
97&97&55193&5604&11\,208\\
193&193&24508105&1764132&3\,528\,264\\
241&241&1102388225&71011068&142\,022\,136\\
337&337&18648111017&1015827336&2\,031\,654\,672\\
409&409&2263478264165&111921796968&223\,843\,593\,936\\
601&601&3419107492676845&139468303679532&278\,936\,607\,359\,064\\
769&769&453881183125633513&16367374077549540&32\,734\,748\,155\,099\,080\\
    \end{tabular}
    \medskip
    \caption{The second simple root~$\alpha$ 
    of
    the lattice~$L$ underlying the ring of algebraic integers in $\Q(\sqrt n)$.
    Running Vinberg's algorithm, with $\sqrt n$ as control vector and $\{-1\}$ as batch~$0$,
    finds $\alpha$ in the indicated batch of roots of that norm.} 
    \label{TableSlowness}
\end{table}

\begin{theorem}[Slowness in rank~$2$]
    \label{ThmSlowness}
    In the setting above, take $u$ to be a fundamental unit for~$\calO$,
    and $v=u$ or $u^2$ according to whether $u$ has norm $1$ or~$-1$.
    Suppose $u$ is chosen so that $v$ has positive trace, and
    negative inner product with~$k$.
    Then $\alpha$ is the smallest positive multiple of $1+v$ 
    that lies in~$\calO$.
\end{theorem}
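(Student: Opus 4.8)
The plan is to carry out everything inside $\Oup(L)$, recognizing it as an infinite dihedral group whose orientation-preserving part is multiplication by norm-one units. First I would fix the inner product to be $x\cdot y=\tfrac12\,\mathrm{tr}(x\bar y)$, so that $x^2=N(x)$ is the field norm of $\calO$; then $k=\sqrt n$ is timelike with $k^2=-n$, while $\alpha_0=-1$ has norm $1$ and $\alpha_0\cdot k=0$. Thus $k$ lies on the mirror $\alpha_0^\perp$, so it is a vertex of whichever chamber contains it, and exactly two chambers meet there. A one-line computation gives the reflection in $\alpha_0$ as $r_0\colon x\mapsto-\bar x$. The hypotheses on $u$ ensure that $v$ is a unit with $N(v)=1$ and $v,\bar v>0$ (positive norm together with positive trace), so that multiplication by $v$ is a lattice isometry preserving each light cone.

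Next I would pin down $\SOup(L)$. The norm form of $L$ is the norm form of $\Q(\sqrt n)$, whose special orthogonal group consists exactly of the multiplications by norm-one elements; imposing that the lattice and the light cones be preserved leaves precisely multiplication by the totally positive norm-one units. Since $u$ is fundamental and $v$ is $u$ or $u^2$ according to the sign of $N(u)$, those units are exactly the powers of $v$, so $\SOup(L)$ is infinite cyclic, generated by $\rho\colon x\mapsto vx$, and $\Oup(L)=\gend{\rho}\rtimes\gend{r_0}$ is infinite dihedral. Every orientation-reversing involution of $\Oup(L)$ is a reflection in a spacelike lattice vector, and because it preserves $L$ the primitive vector $\beta$ in that direction satisfies $\beta\cdot L\sset\frac{\beta^2}{2}\Z$; hence $\beta$ is an almost-root and, being primitive, a root. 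So every reflection of $\Oup(L)$ lies in $W(L)$, giving $W(L)=\Oup(L)$, and the mirrors are the fixed points of the reflections $\rho^{j}r_0$ for $j\in\Z$.

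Now I would locate $C$. Composing gives $r_\alpha:=\rho r_0\colon x\mapsto-v\bar x$, and since $r_\alpha(1+v)=-v(1+\bar v)=-(1+v)$, the vector $1+v$ spans the $(-1)$-eigenline of $r_\alpha$; it is spacelike because $N(1+v)=2+\mathrm{tr}(v)>0$, so $r_\alpha$ is the reflection in the root along $1+v$. The two chambers at the vertex $k$ have $r_0$ as one wall and $\rho^{\pm1}r_0$ as the other, so their remaining roots lie along $1+v$ and $1+\bar v$. Using $1\cdot k=0$ one computes $(1+v)\cdot k=v\cdot k$ and $(1+\bar v)\cdot k=\bar v\cdot k=-(v\cdot k)$; the hypothesis $v\cdot k<0$ therefore makes $1+v$ point away from $k$ while $1+\bar v$ points toward it. Combined with the requirement that $\alpha_0=-1$ (rather than $+1$) be the outward simple root, this selects the chamber whose second wall is the mirror of $r_\alpha$, so the second simple root of $C$ lies along $+(1+v)$.

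Finally, a simple root is by definition primitive, so $\alpha$ is the primitive lattice vector on the ray $\R_{>0}(1+v)$, i.e.\ the smallest positive multiple of $1+v$ lying in $\calO$, as claimed. I expect the main obstacle to be the second step together with the orientation bookkeeping in the third: identifying $\SOup(L)$ with the norm-one units, verifying that $v$ (not merely $u$) generates the totally positive ones, and keeping the signs straight---which light cone is the future, outward versus inward roots, and the index shift under $\rho$---so that the adjacent mirror on the correct side of $k$ is exactly the one fixed by $r_\alpha$ and no root lies strictly between.
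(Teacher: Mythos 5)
Your proposal is correct and follows essentially the same route as the paper: identify $\Oup(L)$ as the infinite dihedral group generated by the reflection $x\mapsto-\bar x$ in the root $1$ and multiplication by $v$, deduce $W(L)=\Oup(L)$, observe that their composition negates $1+v$ so the primitive vector along $1+v$ is a root, and then use $v\cdot k<0$ together with the choice $\alpha_0=-1$ to see that this root is the second simple root of $C$. The only (inessential) difference is how you show $\gend{V,R}=\Oup(L)$ — you classify $\SOup(L)$ via norm-one totally positive units, whereas the paper argues by transitivity on the norm-one vectors of $L$.
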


\begin{proof}
     The reflection $R$ in the root~$1$, which acts by the
    composition of Galois conjugation and multiplication by~$-1$,
    lies in $\Oup(L)$.
    By the positive trace hypothesis, $v$ lies in the same branch
    of the norm~$1$ hyperbola as~$1$.  So its multiplication
    operator $V$ also lies in~$\Oup(L)$.  Furthermore,
    by its definition in terms of~$u$,
    $v$ generates the multiplicative group of all lattice points in
    that branch of the hyperbola.  It follows that $\gend{V,R}$ 
    acts transitively on the norm~$1$ elements of~$L$. Since the
    $\Oup(L)$-stabilizer of~$1$ is trivial, it follows that
    $\gend{V,R}=\Oup(L)$.
    This group is infinite dihedral, because $R$ inverts~$V$.  
    Because $V\circ R$ is also a reflection, we have  $W(L)=\Oup(L)$.

    Because Galois conjugation exchanges $v$ and $v^{-1}$, it follows that $V\circ R$ sends
    sends $1$ to $-v$ and $v$ to $-1$.  Therefore
    $V\circ R$ negates~$v+1$, and   it follows that the smallest
    positive multiple $\alpha'$ of $v+1$ that lies in~$L$ is a root of~$L$.
Because $\alpha'$ has negative inner product with~$\alpha_0$, $\{\alpha_0,\alpha'\}$
    is a set of simple roots for the reflection group they
    generate.  We have seen that this group is all of~$W(L)$,
    so they are simple roots for~$L$.  Finally, by construction
    their chamber contains~$k$.  So $\alpha'=\alpha$.
\end{proof}

We used the PARI/GP package \cite{PARI} to work out many examples,
a few of which appear in table~\ref{TableSlowness}.  The last column says
which batch \emph{of roots of that norm} contains~$\alpha$.  
If $\calO$ has no elements with half-integral $\sqrt{n}$-component,
then this is just the $\sqrt{n}$-component of~$\alpha$.
Otherwise, the batch number is twice this.  
Vinberg's algorithm inspects at least this many batches to find~$\alpha$.
Usually it will also inspect  some
(empty) batches of roots of other norms.  So the later entries of the
table show that Vinberg's algorithm
can take a very long time.

Each of these troublesome
lattices can be embedded in higher-dim\-en\-sion\-al
Lorentzian lattices, dashing any hopes one might
have that this is $2$-dimensional phenomenon. 
For example, $(41234,\discretionary{}{}{} 4005,0,0)$
is a simple root of 
$\twobytwo{1}{0}{0}{-106}\oplus\twobytwo{1}{0}{0}{1}$.
Therefore running Vinberg's algorithm
with $(-1,0,0,0)$ as control vector takes at least $4005$ batches
to find it.  (Working out each 
batch in rank~$4$ also requires
more work than in rank~$2$.)

\section{Normed Dynkin diagrams}
\label{SecNormedDynkinDiagrams}

\noindent
There are several 
variations on the definition of a Dynkin diagram. The one
we use is the following:
a graph in which each pair
of nodes are joined by at most one bond, which may have any one of four types:
single, double, triple and heavy.  Double and triple bonds
are oriented, and heavy bonds may be oriented or not.  
(We interpret triple bonds as in Lie theory, indicating an angle $\pi/6$,
rather than the $\pi/5$ of \cite{VinbergOnTheGroups}.
Geometrically, heavy bonds indicate an angle ``$\pi/\infty$'', meaning parallelism.
For example, the bond in the affine diagram 
$\Atilde_1$ is a heavy bond.)

Suppose $L$ is a lattice, $\alpha_1,\dots,\alpha_n$ are roots of it, and any
two of them have nonpositive inner product and  positive-definite 
or positive-semidefinite span.  Then their Dynkin diagram 
is defined in the usual way: its nodes are $\alpha_1,\dots,\alpha_n$ with
$\alpha_i,\alpha_j$ joined (or not) as follows:
\begin{enumerate}
    \item
        if $\alpha_i\perp \alpha_j$ then they are not joined;
    \item
        if the inner product matrix of $\alpha_i$ and $\alpha_j$ is a rational
        multiple of 
        $$
        \begin{pmatrix} 2&-1\\ -1&2
        \end{pmatrix},
        \begin{pmatrix}2&-1\\-1&1 
        \end{pmatrix},
        \begin{pmatrix}6&-3\\-3&2 
        \end{pmatrix},
        \begin{pmatrix}4&-2\\-2&1 
        \end{pmatrix}\hbox{ resp. }
        \begin{pmatrix}1&-1\\-1&1 
        \end{pmatrix}
        $$
        then they are joined by a single bond, double bond, triple bond,
        oriented heavy bond, resp.\ unoriented heavy bond. In the
        oriented cases, the bond  points from the longer root to the
        shorter.
\end{enumerate}
Under the positive definiteness/semidefiniteness hypothesis,
these alternatives cover all cases.

A norm on a Dynkin diagram means  the assignment
of a positive number  to each node (its ``norm''),
subject to constraints corresponding to the previous paragraph:
\begin{enumerate}
    \item
        two nodes joined by a single bond, or by an unoriented
        heavy bond, have equal norms;
    \item
        if two nodes are joined by a double, triple, resp.
        oriented heavy
        bond, then the norm of the
        node at the tail of the arrow is $2$, $3$, resp.\ $4$ times
        the norm of the node at the
        tip of the arrow.
\end{enumerate}
A normed Dynkin diagram means a Dynkin diagram together with 
a norm on it.
Given $L$ and $\alpha_1,\dots,\alpha_n$ as above, the basic example of
a norm on their Dynkin diagram is got by defining the
norm of each node to be the norm of the corresponding root.
Conversely, if $\D$ is a normed Dynkin diagram, then it arises
in this way from a unique  inner product on the free abelian group generated by
the nodes of~$\D$: the norm on~$\D$ gives the norms of the
generators, and then the edges of~$\D$ determine their
pairwise inner products.

A Dynkin diagram $\D$ is called spherical if it admits a norm for
which the inner product on~$A$ is positive definite.  (An equivalent
definition is to require this for every norm.)
This is equivalent to  each component
of $\D$ having one of the classical types 
$A_n,\discretionary{}{}{}B_n,\discretionary{}{}{}C_n,\discretionary{}{}{}D_n,\discretionary{}{}{}E_6,\discretionary{}{}{}E_7,\discretionary{}{}{}E_8,\discretionary{}{}{}F_4,\discretionary{}{}{}G_2$.
We call $\D$
cuspidal if it has a norm which is positive semidefinite
but not definite.  
If $\D$ is connected, then this is equivalent to $\D$ appearing
in the well-known list of ``affine'' Dynkin diagrams $\Atilde_n,\dots$
(The nomenclature is less standard than in the spherical case; see
\cite{AllcockAffine},\cite{Kac},\cite{MoodyPianzola}.)  
If $\D$ is disconnected, then it is cuspidal  if and only if every component is
spherical or affine, with at least one affine component being
present.

Now suppose $\D$ is a normed spherical Dynkin diagram 
and $N$ is a positive number.  By a norm~$N$ extension of~$\D$
we mean an inclusion of $\D$ into a normed Dynkin diagram
$\D'$ with one more node than~$\D$, the extending node having norm~$N$.
We call the extension spherical resp.\ cuspidal if
$\D'$ is.
These are the only extensions that will be important for us.  

In our root-finding algorithms
for Lorentzian lattices, we will need to find all such
extensions of~$\D$.  The main application is to find all possibilities
for the root system at a vertex, given the root system
at an incident edge.
The classification of spherical Dynkin
diagrams shows that every norm~$N$ spherical 
extension occurs in one of the following ways,
where $\alpha$ denotes the extending node:
\begin{enumerate}
    \item
        $\alpha$ is not joined with any node of $\D$.
    \item
        $\alpha$ is singly joined with $1$, $2$ or $3$ nodes of~$\D$ of norm~$N$.
    \item
        $\alpha$ is doubly joined to/from a node of $\D$ of norm $N/2$ or $2N$, 
        and singly joined with $0$ or $1$ nodes of $\D$ of norm~$N$.
    \item
        $\alpha$ is triply joined to/from a node of~$\D$ of norm $N/3$ or $3N$.
\end{enumerate}
To find the norm~$N$ spherical extensions, one just constructs all these
extensions and discards the ones that are not spherical.  
The size of this enumeration is fairly small, for example if $\D$ is an $A_1^{20}$
diagram with all norms equal, then it has
$1+20+\binom{20}{2}+\binom{20}{3}=1351$ spherical extensions with that norm.
In our intended application,  the classification of reflective Lorentzian lattices, 
this is a worst-case scenario, because  Esselmann proved that no such lattices exist of rank${}>22$.
Typically there are just a few spherical extensions.

Enumerating
the cuspidal norm~$N$ extensions of $\D$ is similar; every one occurs in one
of the following ways:
\begin{enumerate}
    \item
        $\alpha$ is singly joined with $1$, $2$, $3$ or~$4$ nodes of norm~$N$.
    \item
        $\alpha$ is doubly joined to/from a node with norm $N/2$ or $2N$,
        and doubly joined to/from another node with norm $N/2$ or $2N$.
    \item
        $\alpha$ is doubly joined to/from one node which has norm $N/2$ or $2N$, and singly
        joined
        with $0$, $1$ or $2$ nodes of norm~$N$.
    \item
        $\alpha$ is triply joined to/from one node which has norm $N/3$ or $3N$, and singly
        joined with $0$ or $1$ nodes with norm~$N$.
    \item
        $\alpha$ is joined by an unoriented heavy bond with a node of $\D$ having norm $N$.
    \item
        $\alpha$ is joined by an oriented heavy bond to/from a node of $\D$ having norm $N/4$ or $4N$.
\end{enumerate}
One constructs all these extensions and discards the non-cuspidal ones.

\section{Batch 0 (spherical case)}
\label{SecBatchZero}

\noindent
In the setting which motivates this section, 
one is given a Lorentzian lattice~$L$ and a timelike 
lattice vector~$k$.   
To begin
Vinberg's algorithm, with $k$ as control vector, one must find
``batch 0'', meaning a 
system of simple roots for the root system
$\Pi_k$ consisting of all roots of $L$ that are orthogonal to~$k$.
Our goal in this section is to explain how to do this systematically.
We assume given enough members of $\Pi_k$ to span $k^\perp$
rationally.  
Geometrically
this corresponds to $k$ being a vertex of 
a Weyl chamber.   
The Lorentzian hypothesis is not relevant in this section;
all we need is a positive definite sublattice.

\begin{problem}[Spherical Batch~$0$]
	\label{ProbSphericalBatchZero}
	Suppose given  a lattice~$L$ and 
	 a positive-definite 
	subspace~$V$ of $L\tensor\R$.  Suppose
    given some roots of $L$ that span~$V$. 
	Find a set of simple roots for the 
	(spherical) root system consisting of all roots of~$L$ in~$V$.
\end{problem}

\begin{problem}[Spherical Batch~$0$, iterative step]
    \label{ProbSphericalBatchZeroIterative}
    Suppose $L$ is an integral lattice and $\Pi$ is its root 
    system.  Suppose  $\alpha_1,\dots,\alpha_{m}\in\Pi$ are linearly
    independent with positive definite span.
    Write $V_{m-1}$ for the real span of $\alpha_1,\dots,\alpha_{m-1}$, and similarly for $V_m$.
    Assuming that $\alpha_1,\dots,\alpha_{m-1}$ is a system of simple roots for $\Pi\cap V_{m-1}$,
    find the unique $\alpha_m'\in\Pi$ satisfying
    \begin{enumerate}
    \item
        \label{ProbIterativeSimpleExtension}
        $\alpha_1,\dots,\alpha_{m-1},\alpha_m'$ is a system of simple roots for $\Pi\cap V_m$, and
    \item
        \label{ProbIterativeSignDeterminer}
        $\alpha_m'\in V_{m-1}+\R_{>0}\alpha_m$.
    \end{enumerate}
\end{problem}

\begin{SolutionToSphericalIterative}
Write $\D$ for the 
normed Dynkin diagram
    $\D$ formed by $\alpha_1,\discretionary{}{}{}\dots,\discretionary{}{}{}\alpha_{m-1}$. 
    Write $k$ for
    the primitive lattice vector in $V_m\cap V_{m-1}^\perp$ that lies in the halfspace
$V_{m-1}+\R_{<0}\alpha_m$.

Step 1: find a list $\calN$ of positive numbers,
that contains the norms of all roots of~$L$.  
    (For example, appeal to
    Lemma~\ref{LemRootNorms}.)

Step 2: for each $N\in\calN$, find the set
$\calD_N$ of all norm~$N$ spherical extensions $\D\to\D'$ of normed 
    Dynkin diagrams.  

Step 3: for each $N\in\calN$, and each extension $\D\to\D'$ in $\calD_N$,
define $\alpha_{N,\D'}$ to be the (unique) element of $V_{m+1}+\R_{<0}k$ which
has norm~$N$ and whose inner products with $\alpha_1,\dots,\alpha_{m-1}$ are as
specified by~$\D'$.  
Call $\alpha_{N,\D'}$  a candidate if it lies in 
    the constraint lattice $L_N=L\cap \frac{N}{2}L^*$.

    Step 4: compute the priority $-k\cdot\alpha_{N,\D'}/\sqrt{N}$ of each candidate.
    Then $\alpha_m'$ is a candidate of  minimal priority, indeed
    the  (unique) one of them with smallest norm.
\end{SolutionToSphericalIterative}

\begin{proof}
    We will write $\Pi_m$ for $\Pi\cap V_m$, and similarly for $\Pi_{m-1}$.
    These root systems have rank $m$ and $m-1$ by the linear independence
    hypothesis.
    First we prove the existence and uniqueness of~$\alpha_m'$.
    Because $\alpha_1,\dots,\alpha_{m-1}$ is a set of simple roots for
    $\Pi_m\cap k^\perp=\Pi_{m-1}$, it has exactly two extensions to a system of
    simple roots for $\Pi_m$.  Of the corresponding two chambers for $\Pi_m$,
    one  contains~$k$ and 
    the other $-k$.  By its definition, $k$ has negative inner product
    with $\alpha_m$.  Therefore condition \eqref{ProbIterativeSignDeterminer} implies $k\cdot\alpha_m'<0$.
    So the Weyl chamber for $\alpha_1,\dots,\alpha_{m-1},\alpha_m'$ is the 
    one which contains~$k$.  This proves existence and uniqueness. 

    We will prove that $\alpha_m'$ is a candidate in the sense of step~3.
    Writing $N$ for $\alpha_m'^2$, we have
    $N\in\calN$ by the construction of~$\calN$. Similarly, 
    $\D\to\D'$ lies in $\calD_N$, where $\D'$ is the normed Dynkin
    diagram of $\alpha_1,\dots,\alpha_{m-1},\alpha_m'$.
    We claim $\alpha_m'=\alpha_{N,\D'}$.  These vectors both have
    norm~$N$, and they have the same inner products with
    $\alpha_1,\dots,\alpha_{m-1}$.  To prove they are equal it
    is enough to prove that the signs of
    their inner products with $k$ are equal.
    By its definition, $\alpha_{N,\D'}$ has negative inner product
    with~$k$, and we saw above that $\alpha_m'$ does too.  So 
    $\alpha_m'=\alpha_{N,\D'}$.
    In particular, $\alpha_{N,\D'}$ is a norm~$N$ root of~$L$,
    and therefore lies in the constraint lattice~$L_N$.  
    So $\alpha_m'$ is a candidate.

    On the other hand, every candidate $\beta$ is an almost-root of~$L$,
    because it is a norm $\beta^2$ vector  in the constraint lattice~$L_{\beta^2}$.
    Because it lies in $V_{m-1}+\R_{<0}k$, $\beta$ has negative inner product with~$k$.
    Furthermore, its inner products with $\alpha_1,\dots,\alpha_{m-1}$ are 
    given by a normed Dynkin diagram, and are therefore nonpositive.
    We have verified that the sequence of candidates, ordered first by priority
    and second by norm, satisfies the
    hypotheses of  Lemma~\ref{LemVariation}, with $k$ as control vector. 

    The conclusion of that lemma is that
    step~$4$ reveals $\alpha_m'$.
    In detail:
    We know that only one candidate is a simple root, and we
    already named it $\alpha_m'$.
    Therefore $\alpha_m'$ is the only  candidate approved by Lemma~\ref{LemVariation}.  
    The first candidate
    is automatically approved, so it is $\alpha_m'$.
    This holds for any ordering of the candidates satisfying the
    ordering hypotheses \eqref{LemVariationPriorityOrder}--\eqref{LemVariationNormOrder} of Lemma~\ref{LemVariation}.
    So only $\alpha_m'$ has smallest norm, among all candidates of
    smallest priority. 
\end{proof}

\begin{proof}[Solution to Problem~\ref{ProbSphericalBatchZero}]
In the
    situation of Problem~\ref{ProbSphericalBatchZero}, suppose $\alpha_1,\discretionary{}{}{}\dots,\discretionary{}{}{}\alpha_n$ are the given roots that span~$V$.
By discarding those which are linear combinations of their predecessors, 
we may suppose they are linearly independent.  For each $m$, we write $V_m$ for the
rational span of $\alpha_1,\dots,\alpha_m$.  Obviously $\alpha_1$ forms a system of simple roots
for $\Pi\cap V_1$.  For consistency with the inductive step, define $\alpha_1'=\alpha_1$.

    Now suppose $m>1$ and that $\alpha_1',\dots,\alpha_{m-1}'$ is a system of simple roots for 
    $\Pi\cap V_{m-1}$.  In particular, $\alpha_1',\dots,\alpha_{m-1}'$ have the same rational span
    as $\alpha_1,\dots,\alpha_{m-1}$, so $\alpha_1',\dots,\alpha_{m-1}',\alpha_m$ are linearly independent with
    positive definite span.  Using a solution to Problem~\ref{ProbSphericalBatchZeroIterative}, find
    the unique $\alpha_m'\in\Pi\cap(V_{m-1}+\R_{>0}\alpha_m)$, which together with $\alpha_1',\dots,\alpha_{m-1}'$
    forms a system of simple roots for $\Pi\cap V_m$.  Repeating this argument through the $m=n$
    case shows that $\alpha_1',\dots,\alpha_n'$ are a system of simple roots for $\Pi\cap V$.
\end{proof}

\begin{remark}
$\Pi\cap V$ has more than one system of simple roots, and  we seldom care
which one is used.  But the proof distinguishes  one, in 
    terms of the initial roots $\alpha_1,\dots,\alpha_n$.  One can use this to detect
    whether $\alpha_1,\dots,\alpha_n$ form a system of simple roots for $\Pi\cap V$. 
    Namely,
    they do if and only if $\alpha_m'=\alpha_m$ for all $m$.
\end{remark}

\section{Batch 0 (Cuspidal case)}
\label{SecCuspidalBatch0}

\noindent
This section has the same motivation as section~\ref{SecBatchZero}, except with
the control vector $k$ being an ideal point of hyperbolic space.
The method is conceptually similar to the iterative step of the spherical
case, but some details are more complicated. 

\begin{problem}[Cuspidal Batch 0]
    \label{ProbCuspidalBatch0}
    Suppose $L$ is a Lorentzian lattice, $\Pi$ is its root system,
    $k$ is a primitive
    lightlike lattice vector, and $V$ is its orthogonal complement
    in $L\tensor\R$.  
    Suppose $U$ is a complement to $\R k$ in $V$,
    and $u_1,\dots,u_n$ are a system of simple roots for $\Pi\cap U$ and span~$U$.
    Let $C\sset H^{n+1}\cup\partial H^{n+1}$ be the unique Weyl chamber 
    for $W(\Pi\cap V)$ that contains $k$ and whose simple roots include
    $u_1,\dots,u_n$.  Find the remaining simple roots of $C$.
\end{problem}

\begin{remark}
    Suppose that instead of $u_1,\dots,u_n$
    we are given enough members of $\Pi\cap V$ to span $V$ modulo~$\R k$,
    with no other hypotheses.  Then by discarding some of them,
    and applying the spherical batch $0$ case, we could construct
    $u_1,\dots,u_n$ with the properties stated in the problem.  
\end{remark}

\begin{SolutionToCuspidalBatch0}
    Write $\Delta$ for the normed Dynkin
    diagram of $u_1,\dots,u_n$.
    Take the future cone to be the convex hull of the light cone that
    contains~$k$.
    Choose $k'$ to be any future-directed vector in the plane~$U^\perp$,
    but not in $\R k$.  

Step 1: Find a list $\calN$ of positive numbers, that contains the
    norms of all the roots of~$L$. (For example, appeal to Lemma~\ref{LemRootNorms}.)

    Step 2: For each $N\in\calN$, find the set $\calD_N$ of all norm~$N$ cuspidal 
    extensions $\D\to\D'$ of normed Dynkin diagrams. (See Section~\ref{SecNormedDynkinDiagrams}.)

    Step 3: For each $N\in\calN$, and each extension $\D\to\D'$ in $\calD_N$,
    consider the element $u_{N,\D'}$ of $U$ whose inner products with
    $u_1,\dots,u_n$ are as specified by $\D'$.  If the
    constraint lattice $L_N=L\cap\frac{N}{2}L^*$
    contains a vector of the form $u_{N,\D'}+c k$ with $c\in\Q$, then define
    $v_{N,\D'}$ as the vector of this form with $c$ as small as possible
    subject to being positive.  Call 
     these vectors the candidates.

    Step 4: 
    Compute the priority $-k'\cdot v_{N,\D'}/\sqrt{N}$ of each candidate,
    using $k'$ as control vector.
    Order the candidates first by priority and second by norm.

    Step 5: 
    Inductively define a candidate to be approved if it has nonpositive inner products
    with all of its predecessors that were approved.   Then 
    the simple roots of~$C$ are $u_1,\dots,u_n$ and the approved candidates.
\end{SolutionToCuspidalBatch0}

\begin{proof}
    Because $u_1,\dots,u_n$ are among the simple
    roots of~$C$,  and $k'$ is orthogonal to them, 
    $k'$ lies in an edge of~$C$.  So all remaining simple
    roots of~$C$ have negative inner products with~$k'$.

    We begin by claiming that every simple root $\alpha$ of~$C$, other than $u_1,\dots,u_n$, is
    a candidate in the sense of step~3.  First, $\alpha\notin U$, because $u_1,\dots,u_n$ are already
    a set of simple roots for the $n$-dimensional spherical root system
    $\Pi\cap U$.  
    Therefore $u_1,\dots,u_n,v$ are a basis for~$V$.
    Second, write $N$ for $\alpha^2$; by definition $\calN$ contains $N$.  
    Third, write $\D'$ for the 
    normed Dynkin diagram got by adjoining $\alpha$ to $\D$.  Because
    $\D'$ forms a basis for $V$, which is positive semidefinite but not
    positive definite, $\D\to\D'$ is a norm~$N$ cuspidal extension.
    Therefore $u_{N,\D'}$ is defined.  Its inner products with~$u_1,\dots,u_n$
    are determined by the diagram extension, so they are the same as the
    inner products of $\alpha$ with $u_1,\dots,u_n$.  It follows that $u_{N,\D'}$
    is the projection of $\alpha$ to~$U$.  

    Therefore $\alpha=u_{N,\D'}+ck$ for
    some $c\in\Q$.  
    From $\alpha\cdot k'<0$, $k\cdot k'<0$ and
    $\alpha\cdot k'=ck\cdot k'$, 
    we see $c>0$.
    We have $\alpha\in L_N$ because  $\alpha$ is a norm~$N$ root. So
    the candidate $v_{N,\D'}$ is defined.  
    We claim that $\alpha$ coincides with it.  By definition, $v_{N,\D'}\in L_N$ has the
    form $u_{N,\D'}+c_0k$ with $0<c_0\leq c$.  
    So it is enough to prove $c_0=c$.  We suppose $c_0<c$
    and derive a contradiction.
    Since $v_{N,\D'}$
    is a norm~$N$ vector in $L_N$, it is an almost-root, so
    it is either a root or twice a root.  In either case,
     the mirror $v_{N,\D'}^\perp$ strictly separates $k'$ from
    $\alpha^\perp$.
    Therefore no chamber having $\alpha$ as a simple root can 
    contain~$k'$, which is a contradiction.

    Next we observe that every candidate $\beta$ is an almost-root,
    since it is a norm $\beta^2$ element of $L_{\beta^2}$.  By
    the $c>0$ condition in the definition of $\beta$, we have $k'\cdot\beta<0$.
    The inner products of $\beta$ with $u_1,\dots,u_n$ are given by
    the normed Dynkin diagram $\D'$ and are therefore nonpositive.
    Now Lemma~\ref{LemVariation}, applied
    to the root system $\Pi\cap V$, finishes the proof.
\end{proof}

\section{Short vectors in 2-dimensional Lorentzian lattices}
\label{SecShortVectors}

\noindent
Throughout this section, take $L$ to be a $2$-dimensional Lorentzian
lattice and $k\in L\tensor\Q$ to be timelike or lightlike.  Write
$P$ for 
the plane  $L\tensor\R$, and suppose $\frac12P$ is
an open halfplane in $P$ bounded by $\R k$, that contains
timelike vectors arbitrarily close to~$k$.  
We think of $k$ as
defining a point (ordinary or ideal) of hyperbolic $1$-space $H^1$,
and $\frac12P$ as representing a ray emanating from it.  The condition
on the existence of spacelike vectors near $k$ corresponds to the
fact that there is only one ``ray'' emanating from an ideal point of
$H^1$, rather than two from an ordinary point.  The  
goal of this section is to search for short
spacelike lattice vectors $r\in\frac12P$
whose orthogonal complements are near~$k$.  

We write $S$ for the sector of vectors in $\frac12P$ with nonnegative norm.
We illustrate the situation in figure~\ref{FigPlane}, according to
whether $k$ is timelike or lightlike.
We order the rays emanating from $0$ into~$S$ 
according
to their slope in the figure (arrows indicate the direction
of increase).  This induces a partial order on~$S$
and a total order on the 
set of primitive lattice vectors that lie in~$S$.  
We write $\Omega$ for the set of lightlike vectors along the top of~$S$.
Take $r$ to
be the primitive lattice vector in~$S$ that is orthogonal to~$k$.
In section~\ref{SecEdgewalking}, $L$ will be the projection of a higher-dimensional
lattice to a plane~$P$, and $r$ will be got by projecting
a root of it to~$P$ and then scaling to make it primitive.
Section~\ref{SecEdgewalking}
is also the source of the label ``ray/line to walk along''.

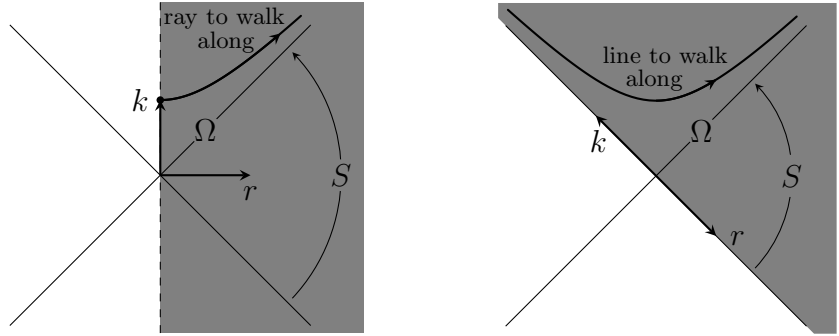
\begin{figure}
\def\Shaded{gray}
\begin{tikzpicture}[x=1.0cm,y=1.0cm]
	\def\InnerRadius{1.5}
	\def\OuterRadius{2.4}
	\fill[fill=\Shaded](2.7,-2.1)rectangle(0,2.3);
	\draw(-2,-2)--(2,2);
	\draw(2,-2)--(-2,2);
	\fill[fill=\Shaded](.6,.6) circle(6pt);
	\draw(.6,.6) node{$\Omega$};
	\draw[dashed](0,-2.1)--(0,0);
	\draw[dashed](0,2.3)--(0,0);
	\draw[thick,-stealth](0,0)--(0,1) node[anchor=east]{$k$};
	\draw[thick,-stealth](0,0)--(1.2,0) node[anchor=north]{$r$};
	\draw[thick,domain=0:1.6,samples=50,-stealth] plot (\x,{sqrt(1+\x*\x)});
	\draw[cap=round,thick,domain=0:1.87,samples=50] plot (\x,{sqrt(1+\x*\x)});
	\fill[fill=black](0,1)circle(.05);
	\draw(1.8,1.8) node[anchor=south east]{\scriptsize ray to walk};
	\draw(1.4,1.8) node[yshift=-9pt,anchor=south east]{\scriptsize along};
	\draw[-stealth](-43:\OuterRadius) arc(-43:43:\OuterRadius);
	\fill[fill=\Shaded](0:\OuterRadius) circle(6pt);
	\draw(0:\OuterRadius) node{$S$};
\end{tikzpicture}
\qquad\qquad
\begin{tikzpicture}[x=1.0cm,y=1.0cm]
	\def\Radius{1.8}
	\fill[fill=\Shaded](2.4,-2.1)--(2.4,2.3)--(-2.1,2.3)--(-2.1,2.1)--(2.1,-2.1)--cycle;
	\draw(2,2)--(-2,-2);
	\draw(2,-2)--(-2,2);
	\fill[fill=\Shaded](.6,.6) circle(6pt);
	\draw(.6,.6) node{$\Omega$};
	\draw[thick,-stealth](0,0)--(-.8,.8) node[xshift=1pt,yshift=-1pt,anchor=north]{$k$};
	\draw[thick,-stealth](0,0)--(.8,-.8) node[xshift=1pt,yshift=-1pt,anchor=west]{$r$};
	\draw[thick,domain=0:.8,samples=50,-stealth] plot (\x,{sqrt(1+\x*\x)});
	\draw[cap=round,thick,domain=-1.97:1.87,samples=50] plot (\x,{sqrt(1+\x*\x)});
	\draw(1.1,1.3) node[anchor=south east]{\scriptsize line\llap{\phantom y} to walk};
	\draw(.5,1.3) node[yshift=-9pt,anchor=south east]{\scriptsize along};
	\draw[-stealth](-43:\Radius) arc(-43:43:\Radius);
	\fill[fill=\Shaded](0:\Radius) circle(8pt);
	\draw(0:\Radius) node{$S$};
\end{tikzpicture}
\caption{The plane $P=L\tensor\R$, with timelike or lightlike vertex~$k$.  
	The diagonal lines are lightlike, the shading indicates~$\frac12P$, 
	and the circular arcs indicate the ordering on the sector $S$.}
\label{FigPlane}
\end{figure}

\begin{problem}
    \label{ProbShortVectorsInOrder}
    Given a primitive lattice vector $r\in S-\Omega$, 
    and $M>0$, find the primitive lattice vectors of
    norm${}\leq M$ in~$S$ that come after~$r$, in increasing order.
\end{problem}

Algorithm~\ref{AlgPromised} below contains the essential idea of the solution.
The name ``{\sf Promised}'' refers to the hypothesis that $S$ contains
a primitive lattice vector of norm${}\leq M$, after $r$.  There are
two natural settings in which this is automatic.  First, if $L$
is isotropic, then the primitive lattice vector in~$\Omega$
comes after $r$ (by the hypothesis $r\notin\Omega$) and has norm~$0$.
Second, if $L$ is anisotropic and $r^2\leq M$, then 
$S$ contains the images of $r$ under $\SOup(L)\iso\Z$, half of
which come after~$r$.

We distinguish an orientation on $P$ by taking a basis whose
first member lies in the interior of $\frac12P$ and whose second
member is~$k$.  An ``oriented basis'' will mean a basis for~$L$
representing this orientation.

Suppose $M>0$ and $r\in S$ is a primitive lattice vector. Then any
$l\in L$ is called an $M$-supplement of $r$ if two conditions
hold.  First, $(r,l)$ must be an oriented basis.  Second, $l$ must not
lie in the interior of the convex hull of the norm~$M$ hyperbola in~$S$.
We think of $r$ as standing for ``right'' because it points into~$S$,
and $l$ for ``left''.  
If the first condition holds, then by subtracting a multiple of~$r$ from~$l$
we can arrange for the second condition to also hold.

\begin{algorithm}[{\sf Promised($M$,$r$,$l$)}]
    \label{AlgPromised}
    Suppose 
    $r\in S-\Omega$ is a primitive lattice vector, $M>0$,
    and $l$ is an $M$-supplement of~$r$.  Suppose also that $S$
    contains a primitive lattice vector of norm${}\leq M$ that
    comes after~$r$.  
    
    Then this recursive algorithm returns a pair $(r',l')$, where $r'$ is the
    first such vector,
    and $l'$ is
    an $M$-supplement of~$r'$.

	\begin{enumerate}
		\item\relax{\sf[Find middle]}
			\label{StepFindMiddle}
			Set $m\leftarrow r+l$.
		\item\relax{\sf[Go right?]}
			\label{StepRight}
			If $m^2\leq M$ or $m\cdot l<0$ then return {\sf Isotropic($M$,$r$,$m$)}.
		\item\relax{\sf[Done?]}
			\label{StepDone}
			If $l^2\geq0$ and $r\cdot l>0$
			then return $(l,-r)$.			
		\item\relax{\sf[Go left]}
			\label{StepLeft}
			Return {\sf Isotropic($M$,$m$,$r$)}.
	\end{enumerate}
\end{algorithm}

We first remark that $r'$ exists.  By hypothesis, $S$ contains
a primitive lattice vector of norm${}\leq M$ after~$r$. The set
of such vectors is discrete in~$P$, 
and it follows easily that  there is a first one.

We also remark that in the case of $L$ isotropic, this algorithm
suffices to solve Problem~\ref{ProbEdgewalking}.   Given~$r$, one finds
an $M$-supplement for it, and then repeats the algorithm
finitely many times, using the output of each iteration
as input for the next.  The process stops once the primitive lattice vector in~$\Omega$
is found.

\begin{proof}
    First we note that $r'$ lies in the open half-plane $\R r+\R_{>0}l$, because
    it comes after $r$ in the ordering on~$S$.  We claim that  $r'$ even
    lies in the half-open sector $\R_{\geq0}r+\R_{>0}l$.  If
    $l^2<0$ or $l\cdot r<0$, then the intersection of the half-plane with~$S$
    lies in this sector, so certainly $r'$ does too.
    On the other hand, if $l^2\geq0$ and $l\cdot r\geq0$, then $l$ lies in~$S$, and it
    has norm${}\leq M$
    by the $M$-supplement hypothesis.  By definition, $r'\leq l$ with respect to
    the ordering on~$S$,
    so again $r'$ lies in 
    $\R_{\geq0}r+\R_{>0}l$.

    The proof that the algorithm terminates, and gives the
    claimed results, uses induction on the sum of the 
    (just-proven-to-be nonnegative) coefficients of $r'$ with respect to $r,l$.
    The base of the induction is step~\ref{StepDone}.
For orientation  we remark that
the primitive lattice vector
$m$ lies in the ``middle'' of this sector,
    and  subdivides it into the ``right'' subsector 
$\R_{\geq0}r+\R_{>0}m$
and
``left'' subsector $\R_{\geq0}m+\R_{>0}l$.
The  ``go right'' and ``go left'' steps search for $r'$ in these
subsectors.  

First suppose step~\ref{StepRight} applies. Then $r'$ lies in 
    the right subsector $\R_{\geq0}r+\R_{>0}m$, by the same argument we used
    to prove  $r'\in\R_{\geq0}r+\R_{>0}l$.
    Furthermore, 
    the conditions for step~\ref{StepRight} say
    that $m$ is an $M$-supplement
    of~$r$, so $M,r,m$ satisfy the hypotheses of
    {\sf Isotropic($\cdot$,$\cdot$,$\cdot$)}.  
    The $m$-coefficient of $r'$ with respect to $(r,m)$ is the same as the
    $l$-coefficient of $r'$ with respect to $(l,m)$.  And the $r$-coefficient
    of $r'$ with respect to $(r,m)$ is strictly smaller than with respect to $(r,l)$.
    By induction, 
    {\sf Isotropic($M$,$r$,$m$)} returns $(r',l')$ where $l'$ is an $M$-supplement 
    to~$r'$.  This completes the proof if step~\ref{StepRight} applies.

    Now suppose step~\ref{StepDone} applies.
    It will be enough to prove $r'=l$, because $(l,-r)$ is an oriented basis,
    and $-r$ is an $M$-supplement to~$l$ by $-r\notin S$.  
    By the conditions for step~\ref{StepDone}, $l$ lies in~$S$, so the
    $M$-supplement hypothesis forces $l^2\leq M$.  
    Because $r'\in \R_{\geq0}r+\R_{>0}l$,  to show $r'=l$ it suffices to
    show that every $v\in\Z_{>0}r+\Z_{>0}l$ has norm${}>M$.
    Note that $v$ can be expressed as the sum of $m$ and some nonnegative
    multiples of $r$ and~$l$.   Therefore $v^2>M$ follows from the facts
    $r^2\geq 0$ (since $r\in S$),
    $l^2\geq0$ and $l\cdot r>0$ (since this step does apply), 
    $m^2>M$ (since step~\ref{StepRight} didn't apply), and $m\cdot l\geq0$ and $m\cdot r\geq0$
    (since $m=r+l$).
    This completes the proof if step~\ref{StepDone} applies.

    Finally, suppose step~\ref{StepLeft} applies.  The calculation in the
    previous paragraph shows that every $v\in\Z_{\geq0}r+\Z_{>0}m$
    has norm${}>M$.  Therefore $r'$ cannot lie in the right subsector,
    and must lie in the left subsector $\R_{\geq0}m+\R_{>0}l$.
    Since step~\ref{StepRight} didn't apply, $m$ lies in~$S$, but not in $\Omega$.  
    Obviously
    $(m,l)$ is an oriented basis for~$L$.  Since $l$ is an $M$-supplement
    to~$r$, it is also one for~$m$.  Therefore $(M,m,l)$ satisfy the
    hypotheses of ${\sf Isotropic(\cdot,\cdot,\cdot)}$.  
    Arguing as for step~\ref{StepRight} shows that the coefficients
    of $r'$ with respect to $(m,l)$ have smaller sum than do its 
    coefficients with respect to $(r,l)$.  By induction,
    ${\sf Isotropic}(M,m,l)$ returns $(r',l')$ where $l'$
    is an $M$-supplement for~$r'$.  This completes the proof.
\end{proof}

\begin{remark}[Performance]
    \label{RkPerformance}
    Write $(r_n,l_n)$ for the second and third arguments to the
    $n$th call to
    {\sf Promised}, starting with $(r_0,l_0)$.  Write $\twobyone{a_n}{b_n}$
    for the coefficients of the final answer $r'$ with respect
    to $(r_n,l_n)$.  Because
    $(r_{n+1},l_{n+1})=(r_n,r_n+l_n)$ or
    $(r_n+l_n,l_n)$,  $\twobyone{a_{n+1}}{b_{n+1}}$ 
    is got from $\twobyone{a_n}{b_n}$ by 
left-multiplying by 
     $\twobytwo{1}{-1}{0}{1}$ or $\twobytwo10{-1}1$.
    All coefficients arising this way are positive.
    Therefore the algorithm amounts to running the 
    ``subtract the smaller from the larger'' version of the
    Euclidean algorithm
    on the coefficients $\twobyone{a_0}{b_0}$.
    Of course, we don't know
    ahead of time what these are.  Running the Euclidean algorithm
    backwards  amounts to computing the continued fraction
    expansion of $a_0/b_0$.   
    This is the sense in which
    our alternative
    to Vinberg's algorithm resembles the continued fraction expansion
    approach to Pell's equation.

    A simple modification of our algorithm makes it analogous
    to the usual ``replace the larger by its remainder upon division
    by the smaller'' version of the Euclidean algorithm.
    We group together consecutive applications of step~\ref{StepRepeat}, by
     calling ${\sf Promised}(M,r,{\sf Canonical}(M,r,l))$
    instead of ${\sf Promised}(M,r,r+l)$. Here
    ${\sf Canonical}(M,r,l)=l+?r$ is the canonical $M$-supplement
    discussed below.  We group together consecutive applications
    of step~\ref{StepLeft} in a similar way.

    It is standard, but usually phrased differently,
    that the size of the arguments to
    the (usual) Euclidean algorithm grows exponentially in the number of
    steps the algorithm takes.  
    The same reasoning shows that the time required to write down 
    the result of our modified Algorithm~\ref{AlgPromised} (in 
    decimal notation) grows at least linearly in  
    the number of steps taken.
    This does not prove that the
    run time is linear in the size of the answer,
    because as the numbers grow larger, the arithmetic operations 
    in each step take longer.  But it does suggest that
    Algorithm~\ref{AlgPromised} will perform very quickly.  Since the
    rest of the algorithms in this section, and our alternative
    to Vinberg's algorithm (section~\ref{SecEdgewalking}), are just wrappers
    around it, they should be similarly fast.

    See
    \cite[\S5.7.1--5.7.2]{Cohen} and references there 
    for more precise estimates of the running time
    of the continued fraction algorithm, taking into account the growth
    of the coefficients.  See also the survey \cite{Lenstra} for 
    other approaches to Pell's equation.
\end{remark}

In the rest of this section we suppose $L$ is anisotropic.  The main
complication is that $\SOup(L)\iso\Z$, and our
algorithms must search for a generator at the same time they
search for vectors of small norm.  To do this we introduce
what we call the canonical $M$-supplement to a primitive 
lattice vector $r\in S$, for a given $M>0$.  
This is the unique $M$-supplement $l$ for~$r$, for which
$l+r$ is not an $M$-supplement for~$r$.  
The point of this construction is that 
$\SOup(L)$ respects canonical 
$M$-supplementation: if $\phi\in\SOup(L)$,
then the canonical $M$-supplement of $\phi(r)$ is the
$\phi$-image of the canonical $M$-supplement of~$r$.

To find the
canonical $M$-supplement, start with any $M$-supplement and
repeatedly add $r$ to it.  Alternately, one can give
a formula for it:

\begin{lemma}[{\sf Canonical($M$,$r$,$l$)}]
	\label{LemmaCanonical}
	Suppose $L$ is integral and anisotropic, $M$ is a positive integer and 
	$(r,l)$ is an oriented basis for $P$ with $r\in S$.   
    Then the canonical $M$-supplement of~$r$ is
	$l+Kr$ where
	\begin{displaymath}
		K=
			\floor[\Big]{\frac{-r\cdot l+\floor[\big]{\sqrt{(r\cdot l)^2-r^2(l^2-M)}}}{r^2}}
	\end{displaymath}
    \qed
\end{lemma}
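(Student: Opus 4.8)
The plan is to reduce the statement to finding the largest integer $K$ for which $l+Kr$ is an $M$-supplement of $r$, and then to show that this $K$ is exactly the displayed quantity. First I would fix the line $\ell(t)=l+tr$ ($t\in\R$) through $l$ in the direction of~$r$ and record its norm $\ell(t)^2=r^2t^2+2(r\cdot l)\,t+l^2$. Since $r\in S$ is spacelike we have $r^2>0$, so this is an upward-opening parabola in~$t$, taking the value~$M$ at the two reals $t_\pm=\bigl(-(r\cdot l)\pm\sqrt{D}\bigr)/r^2$ with $D=(r\cdot l)^2-r^2(l^2-M)$. Note $D>0$, because the Gram determinant of the basis $(r,l)$ of the Lorentzian plane~$P$ is negative (giving $(r\cdot l)^2-r^2l^2>0$) and $r^2M>0$. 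The goal then becomes to prove that the canonical $M$-supplement is $l+\floor{t_+}r$, and afterwards to identify $\floor{t_+}$ with the formula for~$K$.

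The geometric heart of the argument is to characterize, along this line, which $\ell(t)$ are $M$-supplements. The orientation condition is automatic: $(r,\ell(t))$ is an oriented basis for every~$t$, since adding a multiple of~$r$ to~$l$ does not change $\det(r,\cdot)$. So being an $M$-supplement is equivalent to $\ell(t)$ avoiding the interior of the convex hull of the norm~$M$ hyperbola in~$S$. I would first identify that interior with $\set{v\in S}{v^2>M}$: the convex hull is the region $\set{v\in\tfrac12P}{v^2\ge M}$, which already lies in~$S$, and its interior is the strict inequality. Next I would show that $\ell(t)\in S$ and $\ell(t)^2>M$ hold simultaneously exactly when $t>t_+$. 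For $t\ge t_+$ this is clear: $\ell(t_+)\in S$ lies on the hyperbola, $r\in S$, and $S$ is a convex cone, so $\ell(t)=\ell(t_+)+(t-t_+)r\in S$, while the parabola gives $\ell(t)^2>M$ for $t>t_+$. For the converse, the key point is that because $r$ is spacelike the line~$\ell$ is flatter than the light cone, so it meets the branch of the hyperbola lying in~$S$ only once, at~$t_+$; the smaller root $t_-$ lies on the opposite branch, outside~$S$. Since $S$ is a convex cone and $\ell(t)$ points in the $-r$ direction as $t\to-\infty$, the set of $t$ with $\ell(t)\in S$ is an interval unbounded above, with left endpoint $t_b$ satisfying $t_-<t_b\le t_+$; hence $\ell(t)\notin S$ for $t<t_-$, which rules out the spurious norm${}>M$ region to the left. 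Combining, $\ell(t)$ is an $M$-supplement if and only if $t\le t_+$.

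It then follows at once that the largest integer $K$ with $l+Kr$ an $M$-supplement is $K=\floor{t_+}$, and by its defining property this $l+Kr$ is the canonical $M$-supplement: it is a supplement, while $l+(K+1)r$ is not. Finally I would convert $\floor{t_+}=\floor*{\bigl(-(r\cdot l)+\sqrt{D}\bigr)/r^2}$ into the stated expression using the elementary identity $\floor*{(a+\floor{x})/c}=\floor*{(a+x)/c}$, valid for any integer~$a$, positive integer~$c$, and real $x\ge0$. Here integrality of~$L$ makes $a=-(r\cdot l)$ and $c=r^2$ integers, with $r^2>0$, and $x=\sqrt{D}\ge0$; this yields exactly the displayed~$K$.

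I expect the main obstacle to be the converse direction in the second paragraph: cleanly ruling out the second norm${}>M$ region (at $t<t_-$) by showing it lies outside~$S$. The honest content there is the fact that a spacelike direction crosses a single branch of the norm~$M$ hyperbola exactly once, so that only the larger root~$t_+$ is relevant; everything else is bookkeeping with the convex cone~$S$ and the nested-floor identity.
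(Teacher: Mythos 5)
Your proof is correct. The paper states this lemma with no proof at all (it is asserted with a \qed), so there is no argument of the author's to compare against; your write-up is a complete and valid justification of the omitted verification. The two points that genuinely need care are both handled properly: (i) all $M$-supplements of $r$ lie on the line $l+\Z r$ (since any two oriented bases sharing the first vector differ by adding an integer multiple of it to the second), so the problem really is one-dimensional in $t$; and (ii) of the two roots $t_\pm$ of $\ell(t)^2=M$, it is the larger one $t_+$ that lies on the branch of the hyperbola contained in $S$, the smaller root lying in the opposite spacelike sector outside $\frac12P$ -- your convexity argument (the set of $t$ with $\ell(t)\in S$ is an up-interval whose left endpoint lies in $(t_-,t_+]$) settles this cleanly. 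The final reduction via $\floor{y/c}=\floor{\floor{y}/c}$ for $c\in\Z_{>0}$, using integrality of $r\cdot l$ and $r^2$, is exactly what makes the displayed integer formula agree with $\floor{t_+}$.
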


We explained above that  if $r^2\leq M$, then ${\sf Promised}$
is enough to find the next primitive lattice vector $r'\in S$
of norm${}\leq M$.  But if $r^2>M$, then such a vector
might not even exist.  To approach the problem we start
with a more modest goal: finding a shorter spacelike vector,
or determining that none exist.

\begin{algorithm}[{\sf Shorter($r$,$l$)}]
    \label{AlgorithmShorter}
	Suppose $L$ is anisotropic, $r\in S$ is a primitive lattice vector,  and $l$ is
	its canonical $r^2$-supplement.  

    If $L$ has no spacelike vectors
    of norm${}<r^2$, then this algorithm says so.
    Otherwise it returns $(r',l')$, where $r'$
    is the first primitive such vector of~$S$ after~$r$,
    and $l'$ is the canonical $(r')^2$-supplement of~$r'$.

	\begin{enumerate}
		\item\relax{\sf[Constants]}
			\label{StepConstants}
            Set $M\leftarrow r^2$ and then $A\leftarrow\twobytwo{M}{r\cdot l}{r\cdot l}{l^2}$.
		\item\relax{\sf[Enter main loop]}
			\label{StepMainLoop}
			Set $(r,l)\leftarrow{}${\sf Promised($r^2$,$r$,$l$)}
        \item\relax{\sf[Canonicalize]}
            \label{StepCanonicalize}
            Set
			$l\leftarrow{}${\sf Canonical($r^2$,$r$,$l$)}.
		\item\relax{\sf[Found?]}
			\label{StepFound}
			If $r^2<M$ then return $(r,l)$.
		\item\relax{\sf[Nonexistence?]}
			\label{StepNonexistence}
			If the inner product matrix of $(r,l)$ is equal to~$A$, then stop: $L$ contains no
			spacelike vectors of norm${}<M$.
		\item\relax{\sf[Repeat]}
			\label{StepRepeat}
			Go back to step~\ref{StepMainLoop}.
	\end{enumerate}
\end{algorithm}

\begin{proof}
	We write $(r_n,l_n)$ for the value of $(r,l)$ at the $n$th entry to step~\ref{StepMainLoop}.
	We will prove the following inductively.  First, $r_n$ lies in $S$ and has norm~$M$, 
  and $l_n$ is its canonical $M$-supplement.  Second, $0$ is the only
    lattice vector of norm${}<M$ in the
    sector $\R_{\geq0} r_1+\R_{\geq0} r_n$.
	These properties are trivial for $(r_1,l_1)$.  Now suppose $n\geq1$; we will work through
    the steps and see what the algorithm does.

	Step~\ref{StepMainLoop} makes sense because $S$ contains vectors of norm $r_n^2$ that
    come after $r_n$, 
    for example some of the $\SOup(L)$-images of $r_n$.  After this step and the next,
    $r$ is the first vector of norm${}\leq M$ in $S$ after~$r_n$, and $l$ is the 
    canonical $r^2$-supplement to $r$.

    If step~\ref{StepFound} applies, then by induction, $r$ is the first vector of
    norm${}<M$ in $S$ after $r_1$, and we already know that $l$ is its canonical
    $r^2$-supplement.  So the algorithm halts with the claimed result.  So suppose
    step~\ref{StepFound} does not apply.  That is, $r^2=M$.  By induction,
    $0$ is the only lattice vector of norm${}<M$ in the sector $\R_{\geq0}r_1+\R_{\geq0}r$.

    If step~\ref{StepNonexistence} applies, then $(r,l)$ has the same inner product
    matrix as $(r_1,l_1)$, so the linear transformation sending $(r_1,l_1)$ to $(r,l)$
    is a member of $\SOup(L)$.
    Therefore $S$ is the union of the $\SOup(L)$-images of the sector $\R_{\geq0}r_1+\R_{\geq0}r$,
    which we just saw  contains no lattice vectors of norm${}<M$ except for~$0$.
    So the same holds for~$S$, and  the algorithm halts with the claimed result.

    If step~\ref{StepNonexistence} does not apply, then we return to step~$2$,
    so the pair $(r_{n+1},l_{n+1})$ is defined.  We have already verified our
    inductive hypotheses on it.  It remains only to show that the algorithm terminates.
    Otherwise we would have an infinite sequence $(r_1,l_1),(r_2,l_2),\dots$,
    where the $r_n$ are consecutive norm~$M$ lattice vectors in~$S$, and each $l_n$
    is the canonical $M$-supplement to $r_n$.  Because there are only finitely
     many $\SOup(L)$-orbits of norm~$M$
    lattice vectors, some $g\in\SOup(L)$ sends $r_1$ to some $r_{n>1}$.  By the nature
    of the canonical $M$-supplement, $g$ sends $l_1$ to $l_n$.  So $(r_n,l_n)$
    has inner product matrix~$A$, and the algorithm would have halted via step~\ref{StepNonexistence}. 
\end{proof}

\begin{algorithm}[{\sf NotPromised($M$,$r$,$l$)}]
    \label{AlgorithmNotPromised}
    Suppose $L$ is anisotropic, $r\in S$ is a primitive lattice vector, 
    $l$ is the canonical $r^2$-supplement to~$r$, and 
    $M>0$.  

    If $L$ has no spacelike vectors of norm${}\leq M$, then this algorithm says so.
    Otherwise, it returns $(r',l')$ where $r'$ is the first primitive such
    vector in~$S$ after $r$, and $l'$ is the canonical $M$-supplement of~$r'$.

    \begin{enumerate}
        \item\relax{\sf[Easy case]}
            \label{StepNotPromisedEasyCase}
            If $r^2\leq M$, then set $(r,l)\leftarrow{\sf Promised}(M,r,l)$
            and then $l\leftarrow{\sf Canonical}(M,r,l)$, and return
            $(r,l)$.
        \item\relax{\sf[Nonexistence?]}
            \label{StepNotPromisedNonexistence}
            Set $(r,l)\leftarrow{\sf Shorter}(r,l)$, unless ${\sf Shorter}(r,l)$
            reports nonexistence.  In the latter case, stop: $S$ contains
            no vectors of norm${}\leq M$.
        \item\relax{\sf[Done?]}
            \label{StepNotPromisedDone}
            If $r^2\leq M$, then set $l\leftarrow{\sf Canonical}(M,r,l)$
            and return $(r,l)$.
        \item\relax{\sf[Repeat]}
            \label{StepNotPromisedRepeat}
            Go back to step~\ref{StepNotPromisedNonexistence}.
    \end{enumerate}
\end{algorithm}

\begin{proof}
    If case~\ref{StepNotPromisedEasyCase} applies, then $S$ contains primitive lattice
    vectors of norm${}\leq M$ after~$r$, for example some of the
    $\SOup(L)$-images of~$r$.  The only hypothesis of ${\sf Promised}(\cdot,\cdot,\cdot)$
    that requires checking is that $l$ is a not-necessarily-canonical $M$-supplement of~$r$.
    This follows from the fact that it is an $r^2$-supplement and
    $r^2\leq M$.
    We refer to {\sf Promised} to justify the result in this case.
    ({\sf Promised} 
    does not 
    canonicalize
    the $M$-supplement in its output, which is why we do it here.)
    So suppose case~\ref{StepNotPromisedEasyCase} does not apply.  That is, $r^2>M$.

    We write $(r_n,l_n)$ for the value of $(r,l)$ at the $n$th entry
    to  step~\ref{StepNotPromisedNonexistence}.  We will prove by induction that the $r_i$
    form an increasing sequence of primitive lattice vectors
    in~$S$, the norms of the $r_i$
    exceed~$M$ and form a strictly decreasing sequence, and that there
    are no lattice vectors of norm${}< r_n^2$ except~$0$
    in the sector 
    $\R_{\geq0}r_1+\R_{\geq0}r_n$.  When $n=1$ these claims are
    trivial.  Now suppose $n\geq1$; we will work through the steps and
    see what the algorithm does.

    In step~\ref{StepNotPromisedNonexistence}, if ${\sf Shorter}(r_n,l_n)$ reports nonexistence,
    then $S$ contains no lattice vectors of norm${}<r_n^2$, hence none of norm${}\leq M$,
    as claimed.  Otherwise, $r$ is the first primitive lattice vector
    after $r_n$ in~$S$ with norm${}<r_n^2$, and $l$ is the 
    canonical $r^2$-supplement
    of~$r$.  

    In particular, if $r^2\leq M$, then $r$ is the first primitive
    lattice vector in~$S$ after $r_n$ 
    with norm${}\leq M$.  By inductive hypothesis the same statement
    holds with $r_1$ in place of~$r_n$.  This justifies step~\ref{StepNotPromisedDone} if it applies.  
    (Before the canonicalization in this step, 
    $l$ is the canonical $r^2$-supplement of $r$, which might not be
    the canonical $M$-supplement.)

    So suppose $r^2>M$.  Then we return to step~\ref{StepNotPromisedNonexistence},
    so $(r_{n+1},l_{n+1})$ is defined.  Along the way we have
    proven that it satisfies the inductive hypotheses.
    All that remains is to show that the algorithm terminates.
    This holds because 
    $r_1^2,r_2^2,\dots$ is strictly decreasing, and $L$ has only
    finitely many possible norms${}\leq r_1^2$ of spacelike vectors.
\end{proof}

Finally we can solve the anisotropic case of Problem~\ref{ProbEdgewalking}.  

\begin{algorithm}[{\sf Anisotropic($M$,$r$,$l$)}]
    \label{AlgorithmAnisotropic}
    Suppose $L$ is anisotropic, $r_0\in S$ is a primitive lattice vector, $M>0$, and
    $l_0$ is the canonical $r_0^2$-supplement of~$r_0$.  If $L$ has no spacelike vectors of
    norm${}\leq M$, then this algorithm says so.  Otherwise, it
    returns a generator  $g$ of
    $\SOup(L)$, and a nonempty sequence $\calR=(r_1,r_2,\dots,r_n)$ of vectors, such that:
    $$
    r_1,\dots,r_n,g(r_1),\dots,g(r_n),g^2(r_1),\dots,g^2(r_n),\dots
    $$
    is the sequence, in ascending order, of all
    primitive lattice vectors in~$S$ that have norm${}\leq M$
    and come after~$r_0$.

    \begin{enumerate}
        \item\relax{\sf [Nonexistence?]}
            \label{StepAnisotropicNonexistence}
            Set $(r_1,l_1)\leftarrow{\sf NotPromised}(M,r_0,l_0)$,
unless  ${\sf NotPromised}(M,r_0,l_0)$ reports nonexistence.  In the latter case,
            stop: $L$ has no spacelike vectors with norm${}\leq M$.  
        \item\relax{\sf [Initialize]} 
            \label{StepAnisotropicIntialize}
            \begin{enumerate} 
                \item
                    set $\calR$ to be the one-term sequence $(r_1)$,
                \item
                    set $(r,l)\leftarrow(r_1,l_1)$, and 
                \item
                    set $A\leftarrow\twobytwo{r^2}{r\cdot l}{r\cdot l}{l^2}$.
            \end{enumerate}
        \item\relax{\sf [Enter main loop]}
            \label{StepAnisotropicEnterLoop}
            Set $(r,l)\leftarrow{\sf Promised}(M,r,l)$ and then
            $l\leftarrow{\sf Canonical}(M,r,l)$.  
        \item\relax{\sf [Done?]}
            \label{StepAnisotropicDone}
            If the inner product matrix of~$(r,l)$ equals~$A$, then set
            $g$ to be the linear transformation sending $(r_1,l_1)$
            to $(r,l)$, and return $g$ and $\calR$.
        \item\relax{\sf [Append \& repeat]}
            \label{StepAnisotropicAppendRepeat}
            Append $r$ to~$\calR$, and go to step~\ref{StepAnisotropicEnterLoop}.
    \end{enumerate}
\end{algorithm}

\begin{proof}
    If {\sf NotPromised} reports nonexistence in step~\ref{StepAnisotropicNonexistence}, then $L$ has no
    spacelike vectors of norm${}\leq M$, as claimed.  
    Otherwise, step~\ref{StepAnisotropicIntialize} sets $r$ to be
    the first primitive lattice vector of~$S$ with norm${}\leq M$ after $r_0$,
    and sets $l$ to be its canonical $M$-supplement. 

    Now consider the $n$th entry to step~\ref{StepAnisotropicEnterLoop},
    at which point the initial subsequence $r_1,\dots,r_n$ of~$\calR$
    has been defined, $r$ is another name for $r_n$, $l$ and $l_1$
    are  the canonical $M$-supplements of~$r$ and $r_1$,
    and the following inductive hypotheses hold.
    First, each $r_{i\geq1}$ is the first primitive lattice vector in~$S$ of norm${}\leq M$
    that comes after $r_{i-1}$.  
    Second, no element of $\SOup(L)$ sends $r_1$ to any of $r_2,\dots,r_n$.
    {\sf Promised($M$,$r$,$l$)} makes sense in step~\ref{StepAnisotropicEnterLoop} because $r_n^2\leq M$.

    Upon entering step~\ref{StepAnisotropicDone},
    $r$ is the first primitive lattice vector in~$S$ of norm${}\leq M$ that
    comes after $r_n$, and $l$ is its canonical $M$-supplement.  If
    step~\ref{StepAnisotropicDone} applies, then $g$ is an isometry.  It lies in $\SO(L)$ because 
    $(r_1,l_1)$ and $(r,l)$ are oriented bases.  Together with $r_1,r\in S$
    this shows that $g\in\SOup(L)$.  It is a generator for $\SOup(L)$
    because otherwise a generator would send $r_1$ to some primitive lattice
    vector in~$S$ that lies strictly between $r_1$ and $r$.  This would contradict
    the  fact that $r_1,\dots,r_n$ are
    all the norm${}\leq M$ primitive lattice vectors that come strictly after $r_0$ and
    strictly before $r$.  Since $g(r_1)=r$, this also justifies the rest of our claims.

    If step~\ref{StepAnisotropicDone} does not apply, then step~\ref{StepAnisotropicAppendRepeat} defines
    $r_{n+1}=r$, and $l$ is already its canonical  $M$-supplement.  No element
    of $\SOup(L)$ sends $r_1$ to $r_{n+1}$, because it would also send
    $l_1$ to $l$   and the algorithm would have stopped at the previous step.  So the
    induction continues after returning to step~\ref{StepAnisotropicEnterLoop}.  The algorithm
    terminates, because otherwise the infinite sequence $r_1,r_2,\dots$ would
    contain $g(r_1)$, where $g$ is a generator of $\SOup(L)$.  But then the
    algorithm would have stopped via step~\ref{StepAnisotropicDone}.
\end{proof}

\section{Edgewalking}
\label{SecEdgewalking}

\noindent
This section contains the heart of the paper.  
Informally stated, we solve the following problem: given a corner of
a Weyl chamber, and an edge emanating from it, find the corner at the
other end of that edge.  If the chamber has finite volume, then repeating
this process will find all the corners and simple roots.  

Before making a formal statement, we say what we mean by a corner.
Suppose $L$ is
a Lorentzian lattice of dimension $n+1$, $\Pi$ is its root
system, $C$ is a chamber for $W(L)$,
and  $\{\alpha_i\}$ are simple roots for~$C$.
An ordinary corner of~$C$ means a point $k\in C\cap H^n$ for
which $k^\perp\cap\Pi$ has rank~$n$.  An ideal corner means
a point $k\in C\cap\partial H^n$, such that every
horosphere centered there meets $C$ in a compact set.  An edge
means the obvious thing; we only mention that we take edges
to contain their ideal endpoints.  Topologically each edge is
a closed segment, and if it is incident at a corner, then the
corner is an endpoint of the segment.

\begin{lemma}[Edges end at corners]
    \label{LemEndsOfEdges}
    In the notation above, suppose $C$ is a chamber for $W(L)$,
    $k$ is a corner of~$C$, and $e$ is an edge incident to~$k$.
    Then the other end of~$e$ is also a corner of~$C$.
\end{lemma}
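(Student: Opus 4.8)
The plan is to push the problem down to the $2$-dimensional Lorentzian plane carrying the edge, and then to read off the structure at the far endpoint, in the spirit of the rank-$2$ analysis of Section~\ref{SecShortVectors}. First I would fix notation: let $\ell\sset\overline{H^n}$ be the complete geodesic containing~$e$, let $P\sset L\tensor\R$ be its $2$-dimensional span, and let $P^\perp$ be the orthogonal complement, which is positive definite of dimension $n-1$. The first step is to note that the roots in~$P^\perp$---those orthogonal to all of~$\ell$---already span~$P^\perp$, and so have rank $n-1$. This is forced by $e$ being a genuine $1$-dimensional face of~$C$: the facets containing~$e$ are the mirrors of exactly these roots, and the intersection of those mirrors equals the line~$\ell$ only when the roots span~$P^\perp$. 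Since $k'$ lies on $\ell\sset P$ we have $P^\perp\sset(k')^\perp$, so all $n-1$ of these roots are automatically orthogonal to~$k'$; the whole question is whether enough further roots orthogonal to~$k'$ can be adjoined to certify that $k'$ is a corner.

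Next I would project the remaining simple roots of~$C$ into~$P$. A simple root $\alpha\notin P^\perp$ has nonzero image $\alpha_P\in P$, and for $x$ on~$\ell$ the defining inequality of~$C$ reduces to $x\cdot\alpha_P\le0$, since the $P^\perp$-component of~$\alpha$ contributes nothing. Thus $e$ is exactly the part of~$\ell$ cut out in~$P$ by the transverse mirror-lines $\alpha_P^\perp$, and $k'$ is where the nearest of them meets~$\ell$ on the side away from~$k$. Whether this produces a timelike or a lightlike endpoint is the split that mirrors the anisotropic and isotropic alternatives of Section~\ref{SecShortVectors}.

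In the timelike case the argument is immediate. As an endpoint of~$e$ on the boundary of~$\overline C$, the point~$k'$ lies on some facet, giving a root~$\gamma$ with $k'\cdot\gamma=0$; and since that facet stops the edge rather than containing it, $\gamma\notin P^\perp$. Then the $n-1$ roots of~$P^\perp$ together with~$\gamma$ are linearly independent and orthogonal to~$k'$, so $(k')^\perp\cap\Pi$ has rank~$n$ and $k'$ is an ordinary corner by definition.

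The hard part will be the lightlike case, in which $P$ is isotropic and $k'$ (scaled to be primitive in~$L$) spans its rational isotropic line. Here I must verify the definition directly, that every horosphere centered at~$k'$ meets~$C$ in a compact set. The obstacle is that, in the horosphere at~$k'$, the $n-1$ roots of~$P^\perp$ give hyperplanes all passing through the single point where~$\ell$ pierces the horosphere, and on their own they bound only an unbounded simplicial cone. To compactify it I must exhibit further roots orthogonal to~$k'$ but not to~$\ell$---equivalently, roots whose projection to~$P$ is a nonzero multiple of~$k'$---and show that, together with the $P^\perp$-roots, they make the full system of roots orthogonal to~$k'$ into one all of whose Dynkin components are affine, so that the induced Euclidean reflection group on the horosphere is cocompact. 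This is where the integrality of~$L$ must enter: $k'$ is primitive isotropic and the possible root norms are bounded by Lemma~\ref{LemRootNorms}, and the plan is to use this to show that the facets of~$C$ crowding in on~$k'$ cannot be only the $n-1$ side facets. Ruling out this ``uncapped cone,'' and checking that the extra facets complete the $P^\perp$-roots to a purely affine (hence cocompact) configuration rather than to a merely larger spherical or mixed one, is the crux and the step I expect to demand the most care.
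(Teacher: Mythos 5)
Your setup and your handling of the ordinary (timelike) endpoint coincide with the paper's proof: one takes the $n-1$ simple roots $u_1,\dots,u_{n-1}$ of $C$ orthogonal to $e$, notes that a facet which stops the edge (rather than containing it) supplies one more simple root orthogonal to $k'$ but not to $e$, and concludes that $k'^\perp\cap\Pi$ has rank $n$. The problem is the ideal case, and it is precisely the step you yourself flag as the crux: you never exhibit the additional roots orthogonal to $k'$ that would ``cap the cone,'' nor do you prove that the resulting system is affine with cocompact Weyl group on the horosphere. As written this part of your argument is a plan rather than a proof, and the route you sketch (bounding root norms via Lemma~\ref{LemRootNorms} and classifying the possible configurations at $k'$) would demand considerably more machinery than the lemma requires. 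So there is a genuine gap.

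The paper closes the ideal case without producing any new simple roots of $C$ at all. Work in the upper half-space model with $k'$ at vertical infinity, so that a horosphere $H$ centered at $k'$ is a horizontal Euclidean plane and the mirrors $u_i^\perp$ are vertical hyperplanes meeting along the vertical line containing $e$. The $\O(L)$-stabilizer of $k'$ contains a lattice of horizontal translations acting cocompactly on $H$. Applying these translations to the $u_i$ produces further roots of $L$ (they need not be simple roots of $C$, and that is the point), and their mirrors meet $H$ in families of parallel, discretely spaced hyperplanes whose normal directions span; these cut $H$ into bounded pieces. Since the interior of the chamber $C$ meets no mirror whatsoever, $C\cap H$ lies in the closure of a single such piece, and being closed it is compact --- which is exactly the definition of $k'$ being an ideal corner. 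This is the idea your proposal is missing: you do not need roots orthogonal to $k'$ pointing ``along the edge direction''; the lattice translates of the $n-1$ side mirrors already confine $C\cap H$.
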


\begin{proof}
    Write $u_1,\dots,u_{n-1}$ for
    the simple roots of~$C$ orthogonal to~$e$, and
    $k'$ for the other end of~$e$.  If $k'$ is an ordinary point, then
    the fact that $e$ ends there forces the existence of a
    simple root~$\beta$ which is orthogonal to $k'$ but not~$e$.
    So $k'^\perp\cap\Pi$ contains $u_1,\dots,u_{n-1},\beta$, whose
    span has rank~$n$.

    Now suppose $k'$ is ideal.  
   We use the upper half-space model, with $k'$ at vertical
    infinity.  Each horosphere $H$ 
    centered at~$k'$ appears as a horizontal plane, and the 
    mirrors $u_i^\perp$ appear as vertical halfplanes whose
    intersection is a vertical half-line.
    The $O(L)$-stabilizer of $k'$ contains a normal subgroup~$\Z^n$
    consisting of horizontal translations, 
    that acts cocompactly on $H$.   The $\Z^n$-images of the $u_i^\perp$
    are roots of~$L$.  Their orthogonal complements cut $H$ into bounded
    pieces.  $C\cap H$ lies in the closure of one of  these pieces, 
    and is closed.  So it is  compact.
\end{proof}

\begin{problem}[Edgewalking]
    \label{ProbEdgewalking}
    Suppose $L$ is a Lorentzian lattice of rank~$n+1$, 
    $\Pi$ is its root system,
    $W=W(L)$, $C$ is a Weyl chamber of~$W$, $k$ is a corner of $C$,
    and $e$ is an edge of~$C$ incident to~$k$.  
    Find the corner~$k'$ at the other end of~$e$, and the
    simple roots of $C$ that are orthogonal to~$k'$.
\end{problem}

\noindent
Our solution uses the following notation:

$u_1,\dots,u_{n-1}$ are the simple roots of~$C$ that are orthogonal to~$e$;

$\D$ is the normed Dynkin diagram of $u_1,\dots,u_{n-1}$;

$U$ is the real span of $u_1,\dots,u_{n-1}$;

$P$ is the plane in $L\tensor\R$ orthogonal to~$U$;

$\pi_U,\pi_P$ are the orthogonal projection maps to~$U$ and $P$;

$\frac12P$ is the open half-plane in~$P$ corresponding to~$e$;

$S$ is the sector of spacelike and lightlike vectors in~$\frac12P$;

$r_0$ is the primitive lattice vector in~$S$ that is orthogonal to~$k$;

$L_N$, for any given $N>0$, is the constraint lattice $L\cap\frac{N}{2}L^*$.

\noindent
In section~\ref{SecShortVectors} we defined a partial order on~$S$.  We refine it here
to a total order, by declaring that if two members of $S$ are proportional,
then the one closer to~$0$ precedes the one further from~$0$. 

\begin{SolutionToEdgewalking}
    \leavevmode\nopagebreak

    Step 1: Find a list $\calN$ of positive numbers,  that contains the
    norms of all the roots of~$L$. (For example, appeal to Lemma~\ref{LemRootNorms}.)

    \smallskip
    Step 2: For each $N\in\calN$, find the set $\calD_N$ of all norm~$N$
    spherical extensions $\D\to\D'$ of normed Dynkin diagrams.  (See Section~\ref{SecNormedDynkinDiagrams}.)

    \smallskip
    Step 3: For each $N\in\calN$, and each extension $\D\to\D'$ in $\calD_N$,
    define the following (if
    the stated conditions are met).
    \begin{enumerate}
        \item
            \label{DefinitionsEdgewalkingu}
 $u_{N,\D'}$ is the element of $U$ having inner products
            with $u_1,\dots,u_{n-1}$ as specified by the extension $\D\to\D'$.  
    \item
        \label{DefinitionsEdgewalkingResidualNorm}
     The ``residual norm'' $R_{N,\D'}$ is $N-u_{N,\D'}^2$. 
\item
    \label{DefinitionsEdgewalkingResidualCoset}
    If 
            some $v\in L_N$ satisfies $\pi_U(v)=u_{N,\D'}$, then the ``residual coset''
            $C_{N,\D'}$ is $\pi_P(v)+(L_N\cap P)$. (This is independent of~$v$.)
\item
    \label{DefinitionsEdgewalkingRootResidue}
    If $C_{N,\D'}$ is defined, 
            and some vector of~$S\cap C_{N,\D'}$ has norm $R_{N,\D'}$ and 
            comes after $\R_{>0}r_0\sset S$,
            then the ``root residue'' 
            $r_{N,\D'}$ is the first such vector.
        \item
            \label{DefinitionsEdgewalkingCandidate}
            If $r_{N,\D}$ is defined, then the ``candidate''
            $\alpha_{N,\D'}$ is $u_{N,\D'}+r_{N,\D'}$.
    \end{enumerate}

    \smallskip
    Step 4: If no candidates exist, then
    stop:
    the other endpoint $k'$ of~$e$
    is the ideal point at the end of the ray emanating from~$k$ along~$e$.
    Our solution to the cuspidal batch~$0$ problem (problem~\ref{ProbCuspidalBatch0}) 
    extends $\{u_1,\dots,u_{n-1}\}$ 
    to a set of simple roots for $\Pi\cap k'^\perp$.

    \smallskip
    Step 5: Define $\alpha$ as the (unique) candidate $\alpha_{N,\D'}$ satisfying:
    \begin{enumerate}
        \item
            \label{ItemEdgewalkingDistToVertex}
    among all candidates, it minimizes $-k\cdot r_{N,\D'}/\sqrt{R_{N,\D'}}$,
    \item
        \label{ItemEdgewalkingPriority}
        among all candidates satisfying~\eqref{ItemEdgewalkingDistToVertex}, it minimizes
        the priority $-k\cdot \alpha_{N,\D'}/\sqrt{N}
            =-k\cdot r_{N,\D'}/\sqrt{N}$.
    \item
        \label{ItemEdgewalkingSmallestNorm}
        among all candidates satisfying~\eqref{ItemEdgewalkingDistToVertex} and \eqref{ItemEdgewalkingPriority}, it
            has smallest norm.
    \end{enumerate}
    Then $\{u_1,\dots,u_{n-1},\alpha\}$ is the set of
    simple roots for $\Pi\cap k'^\perp$ corresponding to the chamber~$C$.
    In particular,
    the other endpoint
    $k'$ corresponds to $\alpha^\perp\cap P\sset\R^{n,1}$.
\end{SolutionToEdgewalking}

    The ``residual'' objects in step~3
    refer to
    properties of the projection to~$P$ of a (hypothetical) root $\alpha$ with
    norm~$N$ and diagram extension~$\D'$.  The point is that it is the
    part of $\alpha$ not already described by $u_{N,\D'}$.  

\begin{remark}[Simpler implementation, better performance]
    As written, we compute each candidate separately.  
    This makes it easier to explain the proof, but
    gives up a great deal because
    the algorithms in section~\ref{SecShortVectors} are perfectly suited to
    searching for all candidates with fixed~$N$ simultaneously.
    Furthermore, most candidates have no chance of satisfying
    \eqref{ItemEdgewalkingDistToVertex} in step~\ref{ItemEdgewalkingPriority}, and their computation 
    can be avoided.   

    Suppose $N\in\calN$ is fixed, and write $M$ for the largest
    residual norm among all $\D'\in\calD_N$.  
    Apply the algorithms in section~\ref{SecShortVectors} to the lattice
    $\pi_P(L_N)$.  That is, if $P\cap L_N$ is isotropic then use
    {\sf Promised} repeatedly; otherwise use {\sf Anisotropic} once.
    This gives the list $\calV$ of all primitive 
     vectors $r\in S\cap\pi_P(L_N)$ that have norm${}\leq M$  and
     come after~$r_0$, in increasing
    order.  For each such~$r$, consider all of its positive integer
    multiples $s$ that have norm${}\leq M$.  For each $s$,
    check whether there exists $\D'\in\calD_N$
    with $s^2=R_{N,\D'}$ and $s\in C_{N,\D'}$.  If so,
    then we have found $r_{N,\D'}=s$, hence the candidate
    $\alpha_{N,\D'}$.  If any candidates arise from $r$ in
    this way, then we halt the
    search for norm~$N$ candidates after finding them.
    The point is that if $r'$ comes after $r$ in $\calV$,
    then $r'^\perp$ is further from $k$ than $r^\perp$ is.
    Therefore any candidates arising from $r'$ would be discarded
    in step~5\eqref{ItemEdgewalkingDistToVertex}.

    Furthermore, once a candidate has been found, it can be
    used to abandon the searches for candidates of other norms.
    These searches should be abandoned when
    any additional candidates they might find would automatically be
    discarded in \eqref{ItemEdgewalkingDistToVertex} or \eqref{ItemEdgewalkingPriority} of step~5.
\end{remark}

\begin{remark}
    Using {\sf Anisotropic}
    in the previous remark returns a list
    of vectors $r_1,\dots,r_m\in S\cap L_N$ and a generator~$g$
    for $\SOup(\pi_P(L_N))$, such that
    $$
    \calV=\bigl(r_1,\dots,r_m,g(r_1),\dots,g(r_m),g^2(r_1),\dots,g^2(r_m),\dots\bigr)
    $$
    One cannot examine just  $r_1,\dots,r_m$ when checking
    whether they could be 
    root residues $r_{N,\D'}$, because for example $g(r_1)$
    may lie in a different coset of $P\cap L_N$ in $\pi_P(L_N)$
    than $r_1$ does.  Instead, one
    must examine all terms of $\calV$ through
    $g^{m-1}(r_m)$, where $g^m$ is a
    power of~$g$ that is the restriction of an
    isometry of $L_N$.  Sufficient conditions for this are that
    $g^m$ preserves $P\cap L_N$
    and acts trivially on $\pi_P(L_N)/(P\cap L_N)$.  Therefore
    $m$ can be found from~$g$. 
    If  one of
    $r_1,\dots,r_m,\dots,g^{m-1}(r_1),\dots,g^{m-1}(r_N)$
    has a positive integer multiple $s$ that
    lies in $C_{N,\D'}$ and has norm $R_{N,\D'}$,
    then $r_{N,\D'}$ is the first such multiple (under
    our total ordering on~$S$).
    Otherwise
    $r_{N,\D'}$ does not exist.
\end{remark}

\begin{proof}
    First we claim that if a candidate
    $\alpha_{N,\D'}$ is defined,
    then it
    is an almost-root of~$L$.  This follows from Lemma~\ref{LemConstraintLattices},
    because $\alpha_{N,\D'}$ has norm $R_{N,\D'}+u_{N,\D'}^2=N$
    and lies in $C_{N,\D'}+u_{N,\D'}\sset L_N$.  

    Furthermore, $\alpha_{N,\D'}^\perp$ meets the
    hyperbolic line containing~$e$, because $\alpha_{N,\D'}$
    extends $u_1,\dots,u_{n-1}$ to the spherical Dynkin diagram~$\D'$.  
    Together with the fact that $r_{N,\D'}$ comes after $\R_{>0}r_0$ under
    the ordering on~$S$, this shows that $\alpha_{N,\D'}^\perp$
    meets the ray $E$ emanating from $k$ along~$e$, and has negative inner product
    with~$k$.
    We can now finish the proof in the case that $k'$ is ideal.  
    In this case, no mirror of $W(L)$ can meet~$E$ in
    hyperbolic space.
    Therefore no $\alpha_{N,\D'}$ can exist, and
    the algorithm recognizes in step~4 
    that $k'$ is 
    the ideal endpoint of~$E$.

    So suppose $k'$ is timelike, and write $\beta$ for the root which
    extends $u_1,\dots,u_{n-1}$ to the system of simple roots for $\Pi\cap k'^\perp$
    whose Weyl chamber contains~$C$.  
    We claim $\beta$ is a candidate.
    Define $r=\pi_P(\beta)$ and $N=\beta^2$, and 
    let $\D'$
    be the normed Dynkin diagram of $u_1,\dots,u_{n-1},\beta$.  
    That $r$ comes after $\R_{>0} r_0\sset S$ follows from the fact that
    $\beta^\perp$ meets $E$ but not its initial endpoint~$k$.
    Obviously $k\cdot r=k\cdot\beta$, which is negative by the
    definition of~$\beta$.
    Because $\beta\in L_N$ and
    $\pi_U(\beta)=u_{N,\D'}$, the residual coset $C_{N,\D'}$ is defined
    and contains $r$.
    Because $\beta$ can serve a ``some vector'' in step~3\eqref{DefinitionsEdgewalkingRootResidue}, 
    $r_{N,\D'}$ and $\alpha_{N,\D'}$ are defined.  Now, $r_{N,\D'}$ cannot
    precede $r$ in the ordering on~$S$, because then the mirror
    $\alpha_{N,\D'}^\perp$ would
    cut the hyperbolic segment $\overline{kk'}=e$.  
    On the other hand,
    the definition of $r_{N,\D'}$ shows that it cannot come
    after $r$.  
    Because $r$ and $r_{N,\D'}$ have the same norm, and neither
    precedes the other, they are equal.
    Therefore $\beta$ is the
    candidate $\alpha_{N,\D'}$.

    Now suppose $\gamma$ is any candidate, and $w=\pi_P(\gamma)$.  
    The hyperbolic distance from $k$ to $E\cap\gamma^\perp$ is at
    least as large as the hyperbolic distance from $k$ to $k'$, or
    else $k'$ would not be the other endpoint of $e$.  Therefore
    $$
    -k\cdot w/\sqrt{w^2} \geq -k\cdot r/\sqrt{r^2}
    $$
    with equality just if $\gamma\perp k'$.  So the candidates satisfying
    \eqref{ItemEdgewalkingDistToVertex} in step~5 
    are exactly the candidates whose mirrors
    pass through~$k'$.  Obviously $\beta$ is among them.

    We saw that these candidates are almost-roots.
    So we may apply Lemma~\ref{LemVariation} 
    to them, taking 
    $W=W(\Pi\cap k'^\perp)$ and using $k$ as the control vector.
    Arguing as in the last step in the proof of
    our solution to Problem~\ref{ProbSphericalBatchZeroIterative} shows that $\beta$ is the unique candidate satisfying
    \eqref{ItemEdgewalkingDistToVertex}--\eqref{ItemEdgewalkingSmallestNorm} of step~5.
\end{proof}

We organize repeated use of the algorithm as follows.  We maintain 
lists of corners found, simple roots found, and rays remaining to
explore.  Initially, these consist of the given corner~$k$, the 
given simple roots for $\Pi\cap k^\perp$, and the rays 
emanating from~$k$ along edges.  (These edges
can be read from the Dynkin diagram of
$\Pi\cap k^\perp$.)  Now we repeat the following process:
walk along the first unexplored
ray $E$,  using our solution to Problem~\ref{ProbEdgewalking}, and then remove $E$ from
the list of rays to explore.  
If the corner $k'$
at the other end of~$E$ is not already known, 
then do the following.  First, add $k'$ to the list of known
corners.  Second, append to the list of known roots
all the simple roots orthogonal to $k'$ that
are not already known.  Third, to the list of unexplored
rays append all the rays emanating from~$k'$, along edges.

\begin{theorem}
    \label{ThmComputesChamber}
    The chamber
    $C$ has finite volume if and only if this process terminates 
    (meaning that after
    some finite number of steps the list of unexplored rays
    becomes empty). 
    In this case, it finds all
    the
    corners and simple roots of~$C$.
\end{theorem}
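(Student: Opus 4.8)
The plan is to recognize the process as a graph search and then reduce the theorem to a purely geometric dichotomy. Let $G$ be the graph whose vertices are the corners of~$C$ and whose edges are the edges of~$C$; by Lemma~\ref{LemEndsOfEdges} the edges of this graph really do join corners, and our solution to Problem~\ref{ProbEdgewalking} computes, for each ray, the corner at its far end. Since the rays emanating from a newly found corner $k'$ are read off from the Dynkin diagram of $\Pi\cap k'^\perp$, the process is an ordinary breadth-first exploration of the connected component $G_0$ of~$G$ containing~$k$: it enqueues each corner of $G_0$ once and walks each incident edge once. Each corner has only finitely many incident edges, because its link is a finite polytope (spherical at an ordinary corner, Euclidean at an ideal one, the latter using the compactness of the horosphere section that defines an ideal corner). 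Hence the process terminates if and only if $G_0$ is finite, and when it does terminate the found corners are the vertices of $G_0$ and the found simple roots are exactly those orthogonal to some such corner. It therefore suffices to prove that $G_0$ is finite if and only if $C$ has finite volume, and that finiteness of $G_0$ forces $G_0=G$.

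The easy direction is that finite volume implies termination. A finite-volume hyperbolic polytope has only finitely many faces, so $G$ is finite; and its boundary $\partial C$ is connected, so (for $n\geq2$; the case $n=1$ is trivial) the $1$-skeleton $G$ is connected. Thus $G_0=G$ is finite, the search terminates, and along the way it visits every corner and records every simple root.

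The substance of the theorem is the converse, which is also where I expect the main difficulty. Suppose the process terminates, let $\mathcal{K}$ be the (finite) set of found corners, and set $K=\mathrm{conv}(\mathcal{K})\subseteq\overline{H^n}$, a finite-volume polytope because $\mathcal{K}$ is finite. Since every corner lies in the convex set~$C$ we have $K\subseteq C$, and I aim to prove $K=C$; this will simultaneously show that $C$ has finite volume and that the found simple roots, being the ones supporting the facets of $K=C$, are all of them. The key local fact is that $C$ and $K$ have the same tangent cone at each $k_i\in\mathcal{K}$. Termination means $G_0$ is edge-closed: every edge of $C$ at $k_i$ ends at another corner of $\mathcal{K}$ and hence lies in~$K$. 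Such an edge is a face of $C\supseteq K$ that is contained in~$K$, so it is also a face --- thus an edge --- of~$K$; conversely every edge of $K$ at $k_i$ is an edge of~$C$. The links of $k_i$ in $C$ and in $K$ therefore have the same vertices, and being convex spherical (or Euclidean) polytopes equal to the convex hulls of those vertices, they coincide. This gives the tangent-cone agreement.

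To finish, suppose $K\neq C$ and choose $p\in C\setminus K$. Because $K$ is the intersection of its facet halfspaces, some facet $F$ of $K$ separates~$p$: writing $\alpha$ for the root with $F\subseteq\alpha^\perp$ and $K\subseteq\{x:x\cdot\alpha\leq0\}$, we have $p\cdot\alpha>0$. The facet $F$ contains a vertex $k_i\in\mathcal{K}$, and by the tangent-cone agreement $F$ lies in a facet of~$C$; hence $\alpha$ is a simple root of~$C$, forcing $p\cdot\alpha\leq0$ for the point $p\in C$, a contradiction. So $K=C$, completing the proof. The two delicate points in this argument are the face-of-a-subpolytope step that identifies the edges of $C$ at a corner with those of~$K$, and the treatment of ideal corners, where one leans on the defining compactness of the horosphere section to know that the Euclidean link is a finite polytope and that $K$ has the correct cusp there.
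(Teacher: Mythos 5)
Your proof is correct, but it approaches the polytope from the opposite side than the paper does. You approximate $C$ from within by $K=\mathrm{conv}(\mathcal{K})$ and prove $K=C$ via agreement of tangent cones at the found corners followed by a separating-facet argument; the paper instead approximates $C$ from without by the polytope $PB$ cut out by the found simple roots, proves by induction along chains of adjacent vertices of $PB$ that every vertex of $PB$ is a found corner (using essentially the same local tangent-cone agreement you use), concludes that $PB\subseteq\overline{H^n}$, hence that the cone $B$ contains no spacelike vectors, hence that no unfound simple root can exist, so $C=PB$. Your route trades the paper's vertex-chain induction (and its implicit reliance on the connectivity of the $1$-skeleton of the finite-sided polytope $PB$) for a one-shot separation argument, at the cost of having to know that $K$ is full-dimensional and that each facet of $K$ spans a wall of $C$. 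Two points in your write-up deserve tightening: first, ``conversely every edge of $K$ at $k_i$ is an edge of $C$'' is an output of the link identity rather than an input to it --- the correct order is to note that each vertex of $\mathrm{lk}_C(k_i)$ lies in the convex set $\mathrm{lk}_K(k_i)\subseteq\mathrm{lk}_C(k_i)$, and that $\mathrm{lk}_C(k_i)$ is the convex hull of its vertices (a spherical simplex at an ordinary corner, since the corner condition makes the local simple roots linearly independent; a compact Euclidean polytope at an ideal one); second, the separation step needs $K$ to be $n$-dimensional, which holds because the $n$ or more edges at the initial corner have linearly independent directions and their far endpoints all lie in $\mathcal{K}$.
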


\begin{proof}
    If walking along an edge leads to a corner already known,
    then the list of unexplored rays shrinks, because one is
    removed and none are added.   
    Therefore, if $C$ has 
    finite volume, hence finitely many corners, then the
    algorithm terminates.
    
    Now suppose the process terminates.
    To prove that all simple roots and corners are
    found, we will use the projective
    model of hyperbolic space.  Suppose $\alpha_1,\dots,\alpha_k$
    are the simple roots found, 
    and let $B=\set{v\in L\tensor\R}{v\cdot\alpha_i\forall i}$
    be the cone they define.  We regard $PB$ as a 
    convex polytope  in the affine subspace of $P(L\tensor\R)$,
    got by discarding a suitable hyperplane of $P(L\tensor\R)$.
    (For example, discard the  orthogonal complement of
    a vector in the interior of~$B$.)  Note that $PB$ contains~$C$.
    We claim that 
    all vertices of~$PB$ are corners of~$C$.  Certainly
    one vertex is, namely the initial corner (call it~$v_1$).

    Now suppose  $v_2,\dots,v_l$ are given,  each $v_i$ being a
    vertex of~$B$ adjacent to~$v_{i-1}$. We claim that $v_l$ is a corner
    of~$C$, not just a vertex of~$B$.  
    By induction, $v_{l-1}$ is a corner of~$C$.  
    (The base case~$v_1$ is already known.)
    Whenever
    we find a new vertex, we also find all simple roots of~$C$
    orthogonal to it.  It follows that  every edge of~$PB$
    emanating from~$v_{l-1}$ contains an edge of $C$ emanating
    from~$v_{l-1}$.
    Because the algorithm has terminated, we have already 
    walked along the ray from $v_{l-1}$  toward~$v_l$, 
    found the second corner $c$ of~$C$
    on it, and found all simple roots of~$C$
    orthogonal to~$c$.  In particular, $c$ is also
    a vertex of~$B$, which forces $c=v_l$, finishing the proof
    of the inductive step.

    Because $PB$ is a finite-sided polytope, every vertex of~$PB$
    lies at the end of such a sequence $v_1,\dots,v_l$.
    Therefore all vertices of 
    $PB$
    lie in $\overline{H^n}$.  So their convex hull $PB$ does too, and
    it follows that $B$ contains no spacelike vectors.  No unfound simple
    root of~$C$ can exist, because it would be a spacelike element of~$B$.
    So $C=PB$.  Since $C$ is the convex hull of finitely many points of
    $\overline{H^n}$, it has finite hyperbolic volume.
\end{proof}

\begin{remark}[Termination in the infinite-volume case]
    \label{RkInfiniteVolumeTermination}
    If $C$ has infinite volume, then 
    our algorithm can fail to find all vertices and simple roots
    of~$C$,
    even if left to run forever.  This
    happens whenever $\partial C$ is disconnected; for examples
    of this see \cite[III]{NikulinRank3}.    In this case, the algorithm finds exactly
    the (infinitely many)
    vertices in the component $K$ of $\partial C$ containing~$k$,
    and the simple roots orthogonal to each of them.  

    When using Vinberg's algorithm,
    the usual strategy for detecting that $C$ has infinite volume,
     effective in principle and often 
     in practice, is to keep searching for simple
    roots until enough are found to detect some
    elements of~$\Oup(L)$
    that preserve~$C$.  If enough are found to
    generate an infinite group, then $C$ must
    have infinite volume.   For example, whether two vertices
    are $\Oup(L)$-equivalent
    can be determined by trying to identify the simple roots at one
    with with simple roots at the other.  

    This strategy works just as well, and probably better, in our setting,
 because finding vertices is a basic part of our approach.
    If $C$ has infinite volume, then the component of $\partial C$
    containing~$k$ has infinitely many vertices, so the $\Oup(L)$-stabilizer
    of that component of $\partial C$ is infinite.  As one finds vertices,
    one can test them for equivalence with known vertices.  Once enough
    vertices are found, infinitely many automorphisms of~$C$ 
    will be visible, so $C$ must have
    infinite volume.
\end{remark}

\end{document}